\newtheorem{lem}{Lemma}[section]
\newtheorem{cor}{Corollary}[section]
\newtheorem{thm}{Theorem}[section]
\theoremstyle{definition}
\theoremstyle{remark}
\definecolor{orange}{rgb}{1,0.5,0}
\begin{document}

\title{ Automorhpic integrals with rational period functions and arithmetical identities}          
\author{Tewlede G/Egziabher, Hunduma Legesse Geleta, Abdul Hassen }        
\address{Department of Mathematics,Ambo University, Ambo, ethiopia
Addis Ababa University, Addis Ababa, ethiopia
 Rowan University, Glassboro, NJ 08028.}
\email{tewelde.geber@ambou.edu.et,  hassen@rowan.edu }
\subjclass[2000]{Primary 11M41}
\keywords{Recurrence, Bernoulli Numbers, Bernoulli polynomials, hypergeometric  Bernoulli Numbers and polynomials. }

\date{\today}          
\maketitle

 \begin{abstract}   In 1961, Chandrasekharan and Narasimhan  showed that for a large class of Dirichlet series the functional equation and two types of arithmetical identities are equivalent.  In 1992, Hawkins and  Knopp proved a Hecke correspondence theorem for modular integrals with rational period function on theta group. Analogous to Chandrasekharan and Narasimhan, in 2015 Sister Ann M. Heath  has shown that the functional equation in Hawkins and Knopp context and two type of arithmetical identities are equivalent. She considered the functional equation and showed its equivalence to two arithmetical identities associated with entire modular cusp integrals involving rational period functions for the full modular group. In this paper we extend the results of Sister Ann M. Heath to entire automorphic integrals involving rational period functions on discrete Hecke group. 
 \end{abstract}

\maketitle
\def\theequation{\thesection.\arabic{equation}}
\section{INTRODUCTION}
\setcounter{equation}{0}

 Let $ \lbrace \lambda_{n}\rbrace$ and $\lbrace \mu_{n} \rbrace$ be two strictly increasing sequences of positive real numbers  diverging to  $\infty,$ as $n\rightarrow \infty$ and let $ \lbrace a_{n} \rbrace$ and  $ \lbrace b_{n} \rbrace$ be two sequences of complex numbers not identically zero. Consider the Dirichlet series $\varphi $ and $\psi$ defined by  
 $$\varphi(s)=\sum_{n=1}^{\infty} \frac{a_{n}}{\lambda_{n}^{s}}\ \ \ \ \mbox{ and } \ \ \ \ \psi(s)=\sum_{n=1}^{\infty} \frac{b_{n}}{\mu_{n}^{s}} $$ with finite abscissas of absolute convergence $\sigma_{a}$ and $\sigma_{b},$ respectively. Suppose that  $\varphi$ and $\psi $ satisfy the functional equation
 \begin{equation}\label{Bocn}
 \Gamma(s)\varphi(s)=\Gamma(\delta-s)\psi(\delta-s),
 \end{equation} 
 where $\delta >0.$\ 
 Chandrasekharan and Narasimhan  \cite{CN} showed that the functional equation (\ref{Bocn}) is equivalent to the following arithmetical  identities (1.2) and (1.3).\\
 
 \begin{equation} \label{b} 
 \frac{1}{\Gamma(\rho +1)}{\sum_{\lambda_n \leq x}}^\prime
 a_n(x - \lambda_n)^\rho 
 = \left( \frac{1}{2\pi}\right)^\rho \sum_{n=1}^\infty
 b_n \left( \frac{x}{\mu_n} \right)^{\frac{\delta +
 		\rho}{2}}  J _{\delta + \rho}  \{4 \pi \sqrt{\mu_n x} \} + Q_\rho (x) \, ,
 \end{equation}
where $x > 0$, $\rho \geq 2 \beta -\delta -\frac{1}{2},$~~ \  $J_\nu(z)$ denotes the usual Bessel function of the first kind of order $\nu,$ 
$$ Q_\rho(x) = \frac{1}{2\pi i} \oint_c \frac{ \Phi(s)(2\pi)^s x^{s+ \rho}}{\Gamma(s + \rho +
 	1)}ds,~~~~\qquad \sum_{n=1}^\infty \mid b_{n}\mid \mu_{n}^{-\beta} < \infty.$$
   
 \begin{equation} \label{c} 
 \left( -\frac{1}{ s}\frac{d}{ds}\right)^\rho \left[ \frac{1}{s} 
 \sum^\infty_{n=1} a_n e^{-s\sqrt{\lambda_n}} \right] 
 = 2^{3\delta+\rho}
 \Gamma(\delta+\rho+\frac{1}{2})\pi^{\delta-\frac{1}{2}}
 \sum^\infty_{n=1} \frac{b_n}{ (s^2 + 16\pi^2\mu_n)^{\delta+\rho+\frac{1}{2}}}
 + R_\rho(s).
 \end{equation}

 where   Re $s > 0, $ $\rho $  is non-negative integer satisfying 
 $\rho \geq  \beta -\delta -\frac{1}{2}$~ ~and
 
 \[ R_\rho(s) = \frac{1}{2\pi i} \oint_c \frac{\Phi(z) (2\pi)^z \Gamma(2z +
 	2\rho + 1) 2^{-\rho}} { \Gamma(z + \rho + 1)} s^{-2z-2\rho-1} dz. 
 \]
 Note that  if $\beta > 0$, then identity   (\ref{c}) holds for $\rho$ satisfying  $\rho \geq \beta -\delta -\frac{1}{2},$   $\rho \in\mathbb{Z}_{\geq 0} $.\\
 
 In \cite{HK1992},  Hawkins and Knopp proved a Hecke correspondence theorem for modular integrals with rational period functions on $\Gamma_{\theta}$, (generated by $Sz=z+2$ and $Tz=-1/z$), a subgroup of the full modular group $\Gamma(1)$.  In their work,  the   functional equation takes the form
 \begin{equation}\label{HKCON}
 \Phi(2k-s)-i^{2k}\Phi(s)=R_{k}(s),
 \end{equation}
 where $$\Phi(s)=\left(\frac{2\pi}{\lambda_{n}}\right)^{-s}\Gamma(s)\sum_{n=1}^{\infty}a_{n}n^{-s}$$
 is associated with a modular relation involving rational period function $q(z)$   of the form
 
 \begin{equation}\label{ModRel} F(z+\lambda)=F(z)\ \ \ \  \mbox{and} \ \ \ \ z^{-2k}F\left( \frac{-1}{z}\right)=F(z)+q(z), \end{equation} where $
 \lambda=\lambda_{n}=2\cos\left( \frac{\pi}{n}\right),$  with    $3\leq n\in\mathbb{N}\bigcup\lbrace \infty\rbrace $ and $2k\in\mathbb{Z}.$ \\
 
 Analogous to Chandrasekharan and Narasimhan,  Sister Ann M. Heath  \cite{SISANN}  showed that the functional equation in the Hawkins and Knopp context (\ref{HKCON}) and the arithmetical identities are equivalent. To prove her results, she used the fact that the correspondence theorem between the functional equational   (\ref{HKCON}) and  the associated entire  integrals form with rational period function for the full modular group $\Gamma(1).$  We \cite{THA} also used  techniques of Chandrasekharan and Narasimhan to prove results analogous to those of Sister Ann M. Heath and established equivalence of two arithmetical identities with functional equation associated with automorphic integrals involving log-polynomial-period functions on the discrete Hecke group. \\
    
In this paper we use the techniques of Chandrasekharan and Narasimhan \cite{CN} and extend the results of Sister Ann M. Heath \cite{SISANN} to entire automorphic integrals involving rational period functions on discrete Hecke group $G(\lambda)$. \\

This paper is organized as follows: In section two we review some results concerning, Heck groups, automorphic integrals with rational period function and present some preliminary results. In section three we present our main results with their proofs.

 \maketitle
\def\theequation{\thesection.\arabic{equation}}
\section{Preliminaries}
\setcounter{equation}{0}
In this section we review some terms and results that are useful in the coming sections.\\

 Recall that    the Hecke group    $G(\lambda)$,  where $\lambda\in\mathbb{R^{+}}$,       is defined as the subgroup of    $SL_{2}(\mathbb{R})$   given by $$G(\lambda)=\left\langle 
\begin{pmatrix} 
1 &\lambda \\ 0& 1
\end{pmatrix},
\begin{pmatrix} 0& 1 \\ -1& 0 
\end{pmatrix}\right\rangle . $$
Equivalently  $G(\lambda)$ is generated by the  linear fractional transformations $S(z)=z+\lambda$    and    $T(z)=-\frac{1}{z}$. The element of $G(\lambda)$ act on the Riemann sphere as linear fractional transformation, that is    $Mz=\frac{az+b}{cz+d}$   for    $ M=\begin{pmatrix}a & b\\ c & d \end{pmatrix}$ $ \in G(\lambda),$   and    $z\in\mathbb{C}\bigcup\lbrace \infty\rbrace,$   thus    $M$   and    $-M$   can be identified with the same linear fractional transformations. Hecke \cite{EHECKE}   showed the group    $G(\lambda)$    is discrete (operates discontinuously) as a set of linear fractional transformations on the upper half plane    $ \mathcal{H}=\{ z=x + i y: y>0\}$    if and only if either    $\lambda>2$   or $\lambda=\lambda_{p}:=2cos(\frac{\pi}{p}),$   with    $3\leq p\in\mathbb{N}\bigcup\lbrace \infty\rbrace .$ ~For $ \lambda \geq 2 $ and $ \lambda=\lambda_{p} ,$~we have the following relations respectively, $$T^{2}= -I, ~~~T^{2}=(S_{\lambda_{p}}T)^{p}=-I.$$ 
It is clear that $G(\lambda_3)=G(1) =\Gamma(1)$    is  the {\it full modular group}, and  $G(\lambda_{\infty}) =\Gamma_{\theta}$  is   the familiar theta group.\\
 
Suppose  $F(z)$   is a meromorphic function in the upper half plane   $\mathcal{H}$  that satisfies (\ref{ModRel}).   Further assume that   $ F$   has Fourier series expansion of the form  
\begin{eqnarray}\label{EN3}
F(z)=\sum^\infty_{m=\nu}a_me^{2\pi imz/\lambda},
\end{eqnarray}  
where $ \Im z =y>y_{o}\geq 0$ and $\nu \in \mathbb{Z}$. The function $F$ is called an {\it automorphic integral of weight $2k$   for the Hecke group   $G(\lambda),$    with rational period function   $\left( RPF\right) ,$    $q(z)$}.    If $q\equiv 0$   then   $F$   is an {\it automorphic form of weight   $2k$   on   $G(\lambda)$} . ~If   $F$   is an automorphic integral and   holomorphic in $ \mathcal{H}$    (that is,    $ \nu \geq 0$) and satisfies  the growth condition  $$ \mid F(z)\mid \leqslant C\left(  \mid z\mid^{\alpha}+ y^{-\beta}\right) ,      \Im(z)=y>0, $$    for some constants    $ C, \alpha ,\beta  >0 ,$   and    $ z \in \mathcal{H}$ ,      one can show that the coefficients   $a_m$   in    $(\ref{EN3})$     satisfy   $$a_m=\mathcal{O}(m^{\beta}),  \ \  m\rightarrow\infty.$$ In this case,     $ F$    is called an \textit{ entire automorphic integral  of weight    $2k$   on   $G(\lambda)$   with rational period function $q$}. \\

For   $ M=\begin{pmatrix}* & *\\ c & d \end{pmatrix}\ \in\ G(\lambda)$    the {\em stroke or slash  operator}is defined by $$F|M:= F|^{M}_{2k}=\left( cz+d\right)^{-2k}F(Mz).$$ 
Thus the second automorphic relation   $(\ref{ModRel})$   can be expressed  as $F|T=F+q.$  In general, for any $M$ in $G(\lambda)$ there is a corresponding   period function $q_M$ such that   $  F|M=F+q_{M}$.   The slash operator satisfies    $F|M_{1}M_{2}=(F|M_{1} )|M_{2}$  for   $ M_{1}, M_{2}\in G(\lambda)$ and hence the family of periodic functions $\{q_M: M\in G(\lambda\}$ are related by 
 \begin{equation}\label{ENN4}
 q_{M_{1}M_{2}}= q_{M_{1}}| M_{2} + q_{M_2},    M_{1},M_{2}\in G(\lambda).
 \end{equation}  
 
     Using the relation $T^{2}=-I$,(\ref{ENN4} imposes a relation on the (RPF)~ $q,$ 
 \begin{equation}\label{ENN5}
   q|T+q=0.
 \end{equation}
 And using the relation  $\left(S_{\lambda_{p}}T \right)^{p}=-I$ for t$\lambda=\lambda_p =  2\cos\left( \frac{\pi}{p}\right) ,   p\in\mathbb{Z}, p\geq 3$, imposes another condition on (RPF)~ $q$,  
 \begin{equation}\label{ENN6}
 q\mid\left(S_{\lambda_{p}}T\right)^{p-1} +q\mid\left(S_{\lambda_{p}}T\right)^{p-2}+\cdots+q\mid\left(S_{\lambda_{p}}T\right)+q=0.
 \end{equation} 
 
 Marvin Knopp   \cite{KP2}    proved that the finite poles of a rational period function on    $\Gamma(1)$    are  only at    $0$   or real quadratic irrationals. He also showed that if    $q$   is a    RPF   of weight   $2k>0$   with poles in   $\mathbb{Q}$,   then for some constants   $\alpha_0,\alpha_1\in\mathbb{C},$
 
 \begin{equation*}
 q(z)=
 \begin{cases}
 \alpha_{0}\left(1-\frac{1}{z^{2k}}\right)&\text{if}  \ \   k>1,\\
 \alpha_{0}\left(1-\frac{1}{z^{2}}\right)+\frac{\alpha_1}{z}&\text{if} \ \    k = 1.
 \end{cases}
 \end{equation*}

 Observe that  if    $F(z)\equiv -\alpha_0$,   then   $\left(F|T\right)(z)=F(z)+q(z)$   implies that   $z^{-2k}F\left(\frac{-1}{z} \right)-F(z)=q(z)$ and hence     $q(z)=\alpha_0\left(1-z^{-2k} \right)$. Thus we consider $q(z)=\alpha_0\left(1-z^{-2k} \right)$~  as the trivial period function of weight    $2k\in\mathbb{R}.$  The following lemma is stated in the work of  Hawkins and  Knopp   \cite{HK1992},  where their underlying group is $\Gamma_{\theta}$ and generalized  to the general Hecke group and multiplier system by Hassen \cite{ABHA}.   
 \begin{lem}\label{LEMA1}
 	Nontrivial rational period function on the Hecke groups satisfying    $(\ref{ENN5})$   and   $(\ref{ENN6})$   exists only if the weight    $2k$   is an integer.
 \end{lem}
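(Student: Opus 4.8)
The plan is to reduce the statement to the single relation $(\ref{ENN5})$, namely $q|T+q=0$, and to exploit in an essential way that $q$ is a \emph{rational} function. First I would write $(\ref{ENN5})$ out explicitly: since $Tz=-1/z$ corresponds, in the slash operator $F|M=(cz+d)^{-2k}F(Mz)$, to $c=-1$ and $d=0$, we have $(q|T)(z)=(-z)^{-2k}q(-1/z)$, so $(\ref{ENN5})$ reads
\[
(-z)^{-2k}\,q(-1/z)=-q(z),
\]
where $(-z)^{-2k}$ is taken with respect to a fixed branch on a suitable simply connected domain. Because $q$ is rational and, being nontrivial, is not identically zero, the function $z\mapsto q(-1/z)$ is again rational and not identically zero; hence on a small annulus $A=\{r_{1}<|z|<r_{2}\}$ chosen to avoid the finitely many zeros and poles of $q(z)$ and of $q(-1/z)$ (and to lie in the domain on which the branch is defined) we may write $(-z)^{-2k}=-q(z)/q(-1/z)=:g(z)$, with $g$ holomorphic and non-vanishing on $A$.

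Next I would argue that this forces $2k\in\mathbb{Z}$. The branch of $(-z)^{-2k}$ occurring on the left, being equal on an open subset of $A$ to the single-valued holomorphic function $g$, has $g$ as an analytic continuation; since $g$ is single-valued on all of $A$, the analytic continuation of $(-z)^{-2k}$ along a loop in $A$ that winds once about the origin must return to its original value. But that continuation multiplies $(-z)^{-2k}$ by $e^{-4\pi i k}$, so $e^{-4\pi i k}=1$, i.e.\ $2k\in\mathbb{Z}$. Equivalently, comparing logarithmic derivatives gives $g'(z)/g(z)=-2k/z$ on $A$, and integrating over a circle $|z|=r$ with $r_{1}<r<r_{2}$ yields $-4\pi i k = 2\pi i N$ with $N\in\mathbb{Z}$. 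I would remark that relation $(\ref{ENN6})$ is not needed for this conclusion; it serves only to constrain the shape of $q$ further once the weight is known to be an integer.

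The only delicate point — and hence the main obstacle — is the bookkeeping around the multivalued factor $(-z)^{-2k}$ that appears once $(\ref{ENN5})$ is written out: one must commit to a branch on a simply connected region, verify that $q(-1/z)\not\equiv 0$, and place the annulus $A$ so that it meets neither the zeros and poles of $g$ nor the branch cut, so that the monodromy (or contour-integral) argument is legitimate. Once this routine care is taken, the rationality of the nontrivial period function $q$ immediately yields $2k\in\mathbb{Z}$, since the multivalued $z^{-2k}$ cannot coincide with a single-valued rational function on any punctured neighbourhood of the origin unless $2k$ is an integer.
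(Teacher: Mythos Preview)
Your argument is correct. The paper itself does not supply a proof of Lemma~\ref{LEMA1}; it merely attributes the result to Hawkins--Knopp \cite{HK1992} (for $\Gamma_\theta$) and to Hassen \cite{ABHA} (for general Hecke groups with multiplier system). Thus there is no ``paper's own proof'' to compare against, and your self-contained monodromy argument fills that gap cleanly: equating the multivalued factor $(-z)^{-2k}$ with the single-valued rational function $-q(z)/q(-1/z)$ on an annulus about $0$ and tracking the monodromy $e^{-4\pi i k}$ forces $2k\in\mathbb{Z}$.

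One remark worth keeping: you correctly observe that only relation~(\ref{ENN5}) is needed, and that (\ref{ENN6}) plays no role in forcing integrality of the weight. This is a genuine simplification over what the statement of the lemma might suggest. The only point to make explicit in a final write-up is that the equation $q|T+q=0$ is initially asserted on $\mathcal{H}$ (where the branch of $(-z)^{-2k}$ is fixed), and one then invokes the identity theorem to propagate the equality $(-z)^{-2k}=-q(z)/q(-1/z)$ along paths of analytic continuation into a full punctured neighbourhood of $0$; you allude to this under ``the only delicate point,'' and it is indeed routine once stated.
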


 Wendell-Culp-Ressler in   (\cite{WR}, Lemma 3)  showed that the poles of any rational periodic function   $q$    of weight    $2k$,    $k\in\mathbb{Z^+}$   on   $G(\lambda)$   are real numbers. He also proved that the order of a nonzero pole of a     RPF   of  weight    $2k$   on   $G(\lambda)$   is    $[k]$.  With appropriate modifications to fit for the current context of functions on $G(\lambda),$ the work of Hawkins and Knopp \cite{HK1992} can be used to state a special form of ~$RPF$ for the solution of (\ref{ENN5}). This form is given by the following lemma.\\
 
 \begin{lem}\label{LEMA2}
 	For   $r\in\mathbb{Z},   \alpha_{j}\in\mathbb{R}\setminus\lbrace 0\rbrace,   C_{r}, C_{rj}\in\mathbb{C}$   for   $j=1, 2 \cdots, p, ~$    let
 	$f_{r}(z,0)=z^{-r}-(-1)^{r}z^{-2k+r},$~~and\\
 	~~~$f_{r}(z,\alpha_{j})=(z-\alpha_{j})^{-r}-(-1)^{r}\alpha_{j}^{-r}z^{-2k+r}\left(z+\frac{1}{\alpha_{j}} \right)^{-r}.$  Then  
  \begin{equation} \label{RPF1}
 	q(z)=\sum_{k\leq r\leq L}C_{r}f_{r}(z,0)+\sum_{j=1}^{p}\sum_{r=1}^{M_j}C_{rj}f_{r}(z,\alpha_{j})~~and~ satisfies~~ q\mid T+q=0.
 	\end{equation}
 		
 \end{lem}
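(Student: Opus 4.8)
The plan is to reduce everything to the $T$-antisymmetry $f|T=-f$ of the individual building blocks $f_{r}(\cdot,0)$ and $f_{r}(\cdot,\alpha_{j})$. Since the stroke operator $g\mapsto g|T=z^{-2k}g(-1/z)$ is $\mathbb{C}$-linear, once we know $f_{r}(z,0)|T=-f_{r}(z,0)$ and $f_{r}(z,\alpha_{j})|T=-f_{r}(z,\alpha_{j})$ for every admissible $r$ and $j$, the displayed $q$ — being a $\mathbb{C}$-linear combination of these — automatically satisfies $q|T+q=0$. Throughout, $k$ is an integer — recall $2k\in\mathbb{Z}$ by Lemma~\ref{LEMA1}, and in the present setting $k\in\mathbb{Z}^{+}$ — so that $(-1)^{-2k}=1$, and for $r\in\mathbb{Z}$ one has $(-1)^{-r}=(-1)^{r}$. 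For the pole at the origin the verification is then immediate: substituting $z\mapsto-1/z$ into $f_{r}(z,0)=z^{-r}-(-1)^{r}z^{-2k+r}$ and multiplying by $z^{-2k}$, the two sign identities turn the result into $(-1)^{r}z^{-2k+r}-z^{-r}=-f_{r}(z,0)$.

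For a pole at $\alpha_{j}\neq 0$ the structural point is that the involution $z\mapsto-1/z$ interchanges the singular point $\alpha_{j}$ with $-1/\alpha_{j}$, so it ought to exchange the two summands of $f_{r}(z,\alpha_{j})$. Concretely one uses
\[
-\frac{1}{z}-\alpha_{j}=-\alpha_{j}z^{-1}\Bigl(z+\frac{1}{\alpha_{j}}\Bigr),\qquad
-\frac{1}{z}+\frac{1}{\alpha_{j}}=\alpha_{j}^{-1}z^{-1}\bigl(z-\alpha_{j}\bigr);
\]
feeding these into $f_{r}(-1/z,\alpha_{j})$, collecting the powers of $z$, $\alpha_{j}$ and $-1$, and multiplying by $z^{-2k}$, one finds that the summand $(z-\alpha_{j})^{-r}$ is carried onto the term built from $(z+1/\alpha_{j})^{-r}$ and conversely, with an overall minus sign, i.e.\ $f_{r}(z,\alpha_{j})|T=-f_{r}(z,\alpha_{j})$. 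Summing over $r$ and $j$ against $C_{r}$ and $C_{rj}$ then yields $q|T+q=0$, as claimed.

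I expect the only genuine difficulty to be clerical — keeping straight the exponents $-r$ and $-2k+r$ together with the accompanying signs and powers of $\alpha_{j}$ in the $f_{r}(\cdot,\alpha_{j})$ calculation; conceptually there is nothing beyond the observation that $z\mapsto-1/z$ is an involution swapping $\alpha_{j}\leftrightarrow-1/\alpha_{j}$ and pairing the pole at $0$ with the behavior at $\infty$. If one additionally wants that the displayed combination exhausts all rational period functions solving $(\ref{ENN5})$ — which is the part genuinely imported from Hawkins and Knopp \cite{HK1992} — one argues by successive subtraction: by Culp-Ressler (\cite{WR}, Lemma~3) such a $q$ has only real poles, a nonzero one of order $[k]$, hence a finite partial-fraction expansion; the relation $q|T=-q$ forces the principal parts at $\alpha_{j}$ and at $-1/\alpha_{j}$ to be coupled exactly as in $f_{r}(\cdot,\alpha_{j})$, and the principal part at $0$ to be coupled with the polynomial part at $\infty$ as in $f_{r}(\cdot,0)$, so that peeling off suitable $f_{r}$'s clears the poles one $T$-orbit at a time and leaves a pole-free $T$-antisymmetric rational function, necessarily $0$. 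The admissible ranges $k\le r\le L$ and $1\le r\le M_{j}$ then simply record the allowed pole orders.
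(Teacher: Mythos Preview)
Your proposal is correct. The paper itself does not supply a proof of this lemma: it merely records the form of $q$ and attributes the result to Hawkins and Knopp \cite{HK1992}, ``with appropriate modifications'' for the group $G(\lambda)$. What you have written is precisely the direct verification that the paper omits --- checking $f_{r}(\cdot,0)\mid T=-f_{r}(\cdot,0)$ and $f_{r}(\cdot,\alpha_{j})\mid T=-f_{r}(\cdot,\alpha_{j})$ via the factorizations $-1/z-\alpha_{j}=-\alpha_{j}z^{-1}(z+1/\alpha_{j})$ and $-1/z+1/\alpha_{j}=\alpha_{j}^{-1}z^{-1}(z-\alpha_{j})$, then invoking linearity --- and your bookkeeping with the signs $(-1)^{-2k}=1$, $(-1)^{-r}=(-1)^{r}$ is accurate. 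You are also right to separate out the converse (that every rational solution of $q\mid T+q=0$ arises this way) as the substantive part drawn from \cite{HK1992} and \cite{WR}; as stated, the lemma only asserts the forward implication, and for that your argument is complete.
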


 \begin{thm}\label{ThM1}
 	
 	Suppose    $F$   is an entire automorphic integral function of weight   $2k,   k\in\mathbb{Z^+}$   for    $G(\lambda)$   with   (RPF  ) $q(z)$, where $q$ has the form described by Lemma \ref{LEMA2}.  Suppose further that   $F$   has a Fourier series expansion of the form 
 	\begin{equation}\label{FEofIF}F(z)=\sum_{m=0}^{\infty}a_{m}e^{2\pi imz/\lambda}, \ \ \ \ \mbox{ with } \ \ \ a_{m}=\mathcal{O}(m^{\beta})\ \ \beta>0, \ m \rightarrow\infty.\end{equation}
 	
 	\begin{equation}\label{Phi1} \varphi(s)=\sum_{m=1}^{\infty}a_{m}m^{-s} \ \ \mbox{ and }\ \  \Phi(s)=\left(\frac{2\pi}{\lambda} \right)^{-s}\Gamma(s)\varphi(s),~~ ~~ s=\sigma+it .\end{equation} 
 	Then   $\Phi(s)$   has a meromorphic continuation to the whole complex plane and can be expressed in the form of
 	$$\Phi(s)=D(s)+D^{0}(s)+E^{0}(s)+E^{H}(s)+E^{B}(s),$$ where
 	\begin{equation}\label{Dse}
 	D(s)=\int_{1}^{\infty}\left(F(iy)-a_{0}\right)\Bigg\{y^{s}+i^{2k}y^{2k-s}\Bigg\}\frac{dy}{y},
 	\end{equation}
 	\begin{equation}\label{Doe}
 	D^{0}(s)=-a_{0}\bigg\{\frac{1}{s}-\frac{i^{2k}}{s-2k}\bigg\},
 	\end{equation}
 	\begin{equation}\label{Eoe}
 	E^{0}=\sum_{k\leq r\leq L} C_{r}(-i)^{r}\Bigg[\frac{1}{r-s} +\frac{i^{2k}}{r-(2k-s)}\Bigg],
 	\end{equation}
 	\begin{align}\label{Ehe}
 	E^{H} (s)& =  -\sum_{j=1}^{p} \sum_{r=1}^{Mj} C_{rj}\frac{(-i)^{r}}{(i\alpha_{j}+1)^{r}}\Bigg\{\frac{1}{s}{_{2}F_{1}}\left[ 1,r;1+s;\frac{1}{(i\alpha_{j}+1)} \right]  \nonumber \\
 	&+ \frac{i^{2k}}{(2k-s)}{_{2}F_{1}}\left[ 1, r; 1+(2k-s); \frac{1}{(i\alpha_{j}+1)}\right] \Bigg\} 	
 	\end{align}
 	\begin{align}\label{Ebe}
 	E^{B}(s)&= i^{2k} \sum_{j=1}^{p} \sum_{r=1}^{k}  C_{rj} \left(\frac{-1}{\alpha_{j}}\right)^{r} B\left( 2k-s ; r-(2k-s)\right) \left( i\alpha_{j}\right)^{2k-s}.
 	\end{align}
 	Moreover,    $ \Phi(s)$     satisfies the functional equation 
 	\begin{eqnarray}
 	\Phi \left( 2k-s\right) -i^{2k}\Phi\left( s\right) = R \left( s\right), ~~where
 	\end{eqnarray}
 	\begin{align*}
 	R(s)=E^{B}(2k-s)-i^{2k} E^{B} (s) 
 	\end{align*}
	and  ${_{2}F_{1}}\left[a,b,c;z \right]$ is the hypergeometric function and   $B(a,b)$   is the Beta function.
 \end{thm}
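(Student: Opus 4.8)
The plan is to run the classical Hecke-type argument used by Chandrasekharan--Narasimhan and by Sister Ann M.\ Heath: push the modular relation (\ref{ModRel}) through a Mellin transform and then evaluate the resulting integral of the period function against the explicit shape of $q$ furnished by Lemma \ref{LEMA2}. First I would record that, for $\Re s$ large,
\[
\Phi(s)=\int_{0}^{\infty}\bigl(F(iy)-a_{0}\bigr)y^{s-1}\,dy,
\]
which follows from (\ref{FEofIF}), (\ref{Phi1}) and $\int_{0}^{\infty}e^{-2\pi my/\lambda}y^{s-1}\,dy=\Gamma(s)(2\pi m/\lambda)^{-s}$; the interchange of sum and integral is justified by absolute convergence once $\Re s>\beta+1$, and the behaviour of $F$ near $y=0$ is controlled by its automorphy.

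Next I would split this integral at $y=1$, substitute $y\mapsto 1/y$ in the part over $(0,1)$, and apply $F(i/y)=(iy)^{2k}\bigl(F(iy)+q(iy)\bigr)$, which is (\ref{ModRel}) restricted to the positive imaginary axis ($2k\in\mathbb Z$ by Lemma \ref{LEMA1}, so $(-z)^{-2k}=z^{-2k}$). Peeling off the constant term $a_{0}$ and evaluating the two elementary integrals $\int_{1}^{\infty}y^{-s-1}\,dy$ and $\int_{1}^{\infty}y^{2k-s-1}\,dy$ (and then continuing in $s$) produces precisely $D(s)$ and $D^{0}(s)$, leaving
\[
\Phi(s)=D(s)+D^{0}(s)+i^{2k}\!\int_{1}^{\infty}q(iy)\,y^{2k-s-1}\,dy,
\]
the last term being equal to $-\int_{0}^{1}q(iy)\,y^{s-1}\,dy$ by (\ref{ENN5}).

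The heart of the proof is the evaluation of this period integral after inserting $q$ from Lemma \ref{LEMA2}. For the terms $C_{r}f_{r}(z,0)=C_{r}\bigl(z^{-r}-(-1)^{r}z^{r-2k}\bigr)$ one meets only integrals $\int_{1}^{\infty}y^{a-1}\,dy$; continuing in $s$ and simplifying the powers of $i$ (via $i^{-r}=(-i)^{r}$ and $i^{4k}=1$) shows their total is exactly $E^{0}(s)$. For each term $C_{rj}f_{r}(z,\alpha_{j})$ I treat its two summands separately: setting $z=iy$ and substituting $y\mapsto 1/y$ turns the summand $(z-\alpha_{j})^{-r}$ into a constant multiple of $\int_{0}^{1}t^{s+r-2k-1}(1+i\alpha_{j}t)^{-r}\,dt$ and the summand $-(-1)^{r}\alpha_{j}^{-r}z^{r-2k}(z+\alpha_{j}^{-1})^{-r}$ into a constant multiple of $\int_{0}^{1}t^{s-1}(t+i\alpha_{j})^{-r}\,dt$. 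Each is evaluated via Euler's integral representation of ${}_{2}F_{1}$ followed by a Pfaff transformation, which brings the hypergeometric argument to $\tfrac{1}{i\alpha_{j}+1}$ (using $\tfrac{i}{i-\alpha_{j}}=\tfrac{1}{i\alpha_{j}+1}$); for the first of the two integrals one additionally applies the Gauss connection formula relating ${}_{2}F_{1}$ at $0$ and at $1$ (equivalently: writes $\int_{1}^{\infty}=\int_{0}^{\infty}-\int_{0}^{1}$ and evaluates $\int_{0}^{\infty}$ as a Beta integral), and it is exactly this extra step that splits that contribution into a hypergeometric part and a Beta-function part carrying the factor $\Gamma(2k-s)$. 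After collecting the powers of $i$, $\alpha_{j}$ and $i\alpha_{j}+1$, the $\int_{0}^{1}t^{s-1}(\cdots)\,dt$ integral yields the $\tfrac1s\,{}_{2}F_{1}[1,r;1+s;\tfrac1{i\alpha_{j}+1}]$ half of $E^{H}(s)$, while the $\int_{0}^{1}t^{s+r-2k-1}(\cdots)\,dt$ integral yields the $\tfrac{i^{2k}}{2k-s}\,{}_{2}F_{1}[1,r;1+(2k-s);\tfrac1{i\alpha_{j}+1}]$ half of $E^{H}(s)$ together with the $(j,r)$-term of $E^{B}(s)$; since the order of a pole of a rational period function of weight $2k$ at a nonzero point is $[k]=k$ (Culp--Ressler), one has $C_{rj}=0$ for $r>k$, so the Beta terms are precisely those displayed in $E^{B}(s)$. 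Altogether $\Phi(s)=D(s)+D^{0}(s)+E^{0}(s)+E^{H}(s)+E^{B}(s)$, and since $D(s)$ is entire while $D^{0},E^{0},E^{H},E^{B}$ are explicitly meromorphic (the hypergeometric factors analytic because $|i\alpha_{j}+1|^{-1}<1$), this gives the asserted meromorphic continuation.

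The functional equation is then immediate: one checks directly that $G(2k-s)=i^{2k}G(s)$ for $G\in\{D,D^{0},E^{0},E^{H}\}$ --- a one-line computation using $i^{4k}=1$ for $D$ and $D^{0}$, and merely an interchange of the two bracketed terms for $E^{0}$ and $E^{H}$ --- whence
\[
\Phi(2k-s)-i^{2k}\Phi(s)=E^{B}(2k-s)-i^{2k}E^{B}(s)=R(s).
\]
I expect the main obstacle to lie in the third paragraph: keeping all the powers of $i$, $\alpha_{j}$ and $i\alpha_{j}+1$, together with the Euler/Pfaff/Gauss transformations, aligned so that the outcome is \emph{verbatim} the displayed $E^{H}$ and $E^{B}$, and being careful to invoke each hypergeometric and Beta identity only on its domain of validity before continuing in $s$.
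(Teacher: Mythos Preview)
Your proposal is correct and follows exactly the Hawkins--Knopp Mellin-transform argument that the paper itself invokes (the paper does not write out a proof but simply says the proof ``is similar to that of Hawkins and Knopp \cite{HK1992}, with appropriate modifications''). Your sketch in fact supplies those modifications --- the split at $y=1$, the substitution $y\mapsto 1/y$ with the automorphic relation, and the Euler/Pfaff/Beta evaluation of the period integral term-by-term from Lemma~\ref{LEMA2} --- so nothing further is needed beyond carrying out the bookkeeping you flag in your final paragraph.
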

   
 The proof  of Theorem \ref{ThM1} is similar to that of Hawkins and Knopp   \cite{HK1992}, with appropriate modifications for the more general group $\lambda_p=2\cos\left(\frac{\pi}{p}\right)>0$. 
 
  \begin{cor}\label{Cr1}
 	Suppose    $\Phi(s),~D^{0}(s),~D(s),~E^{0}(s),~E^{H}(s)$   and   $E^{B}(s)$   are    given as in Theorem \ref{ThM1}.   Then 
 	\begin{enumerate}
 		\item[(a)] $\Phi(s)$   is bounded uniformly in   $\sigma$    in lacunary vertical strips of the form
 		$$ S=\lbrace s=\sigma+it : 2k-\delta\leq\sigma\leq\delta; |t|\geq t_{o}>0 \rbrace.$$
 	\end{enumerate}
 	\begin{enumerate}
 		\item[(b)] $ \delta$ in $(a)$ can be chosen so that the poles of $\Phi(s)$   lying with in the lines   $s=(2k-\delta)+it$   and    $s=\delta+it$ are listed below in the sets;
 	\end{enumerate}
 	$$S_{0}=\lbrace 0,2k\rbrace,~~ S_{E_{0}}=\lbrace2k-L,2k-L+1,...,k-1,k,k+1,...,2k,...,L\rbrace, $$
 	$$ S_{H}=\lbrace [2k-\delta],...,0\rbrace, ~~S_{B}=\lbrace [2k-\delta],...,2k-L,...,2k-1\rbrace.$$
 	
 	The poles of $\Phi(s)$ in  each set arise from   $D^{0}(s), ~E^{0}(s), ~E^{H}(s)$   and   $E^{B}(s)$   respectively.
 	\begin{enumerate}
 		\item[(c)] The residues of    $\Phi(s)$   are given by the formula:
 	\end{enumerate} 
 	\begin{align}
 	\def\res{\mathop{Res}} \underset{s\in S_{0}}{Res}[D^{0}(s)]&=a_{0}\left( i^{2k}-1 \right),\label{EQ0}\\
 	\underset{s\in S_{E^{0}}} {Res}[E^{0}(s)]&= \sum_{m=k}^{L}C_m\left\lbrace -(-i)^{m}+i^{2k-m} \right\rbrace,\label{EQ1}  \\
 	\underset{s\in S_{H}}{Res}[E^{H}(s)]&=-\sum_{j=1}^{p}\sum_{r=1}^{k}C_{rj}\left\lbrace \sum_{m=0}^{[\delta]-2k}\frac{\Gamma(r+m)}{\Gamma(r)}\frac{(-1)^{r}}{m!}i^{m}\alpha_{j}^{-r-m}\right\rbrace ,\label{EQ2} \\
 	\underset{ s\in S_{B}}{Res}[E^{B}(s)]&=\sum_{j=1}^{p}\sum_{r=1}^{k}C_{rj}\left\lbrace \sum_{m=0}^{[\delta]-r}\frac{\Gamma(r+m)}{\Gamma(r)}\frac{(-1)^{r+m}}{m!}i^{m+r-2k}\alpha_{j}^{m}\right\rbrace.\label{EQ3}
 	\end{align}
 \end{cor}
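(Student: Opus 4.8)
The plan is to argue directly from the decomposition
$\Phi(s)=D(s)+D^{0}(s)+E^{0}(s)+E^{H}(s)+E^{B}(s)$
of Theorem \ref{ThM1}, treating each of the five summands separately. The functions $D,D^{0},E^{0}$ are elementary: $D$ is entire, while $D^{0}$ and $E^{0}$ are explicit meromorphic functions with finitely many simple real poles, so these three yield part (a) immediately and contribute the sets $S_{0},S_{E^{0}}$ and the residue sums \eqref{EQ0}, \eqref{EQ1} by inspection. The real content lies in $E^{H}$, which involves the Gauss hypergeometric function evaluated at $z_{0}=(i\alpha_{j}+1)^{-1}$, and in $E^{B}$, which involves the Beta function; these carry the poles in $S_{H}$ and $S_{B}$ and the nontrivial part of the uniform bound.

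For (a): in $D(s)=\int_{1}^{\infty}(F(iy)-a_{0})\{y^{s}+i^{2k}y^{2k-s}\}\,\frac{dy}{y}$, the relation $F(iy)-a_{0}=\sum_{m\ge1}a_{m}e^{-2\pi my/\lambda}$ with $a_{m}=\mathcal{O}(m^{\beta})$ shows the integrand is $\mathcal{O}(e^{-2\pi y/\lambda}y^{\sigma})$ as $y\to\infty$; hence the integral converges absolutely and locally uniformly, $D$ is entire, and since $|y^{s}+i^{2k}y^{2k-s}|\le 2y^{\delta}$ for $y\ge1$ and $2k-\delta\le\sigma\le\delta$, $D$ is bounded on $S$. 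The functions $D^{0}(s)$ and $E^{0}(s)$ are $\mathcal{O}(1/|t|)$ for $\sigma$ bounded and $|t|\ge t_{0}$, hence bounded on $S$. For $E^{H}$ one inserts the power series ${}_{2}F_{1}[1,r;1+s;z_{0}]=\sum_{m\ge0}(r)_{m}z_{0}^{m}/(1+s)_{m}$, which converges since $|z_{0}|=(1+\alpha_{j}^{2})^{-1/2}<1$; a lower bound for $|(1+s)_{m}|$ on $|t|\ge t_{0}$ (comparing $|m+s|$ with $\max(|m+\sigma|,|t|)$) gives ${}_{2}F_{1}[1,r;1+s;z_{0}]=1+\mathcal{O}(|s|^{-1})$ uniformly in $\sigma$, so the prefactors $1/s$ and $1/(2k-s)$ force $E^{H}(s)=\mathcal{O}(|t|^{-1})$. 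For $E^{B}$, write $B(2k-s,r-(2k-s))=\Gamma(2k-s)\Gamma(s-2k+r)/\Gamma(r)$; Stirling's formula gives $|\Gamma(2k-s)\Gamma(s-2k+r)|\asymp|t|^{r-1}e^{-\pi|t|}$, which dominates $|(i\alpha_{j})^{2k-s}|=|\alpha_{j}|^{2k-\sigma}e^{\mp\pi t/2}$, so $E^{B}(s)\to0$ exponentially on $S$. Summing the five estimates proves (a).

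For (b) and (c): $D$ is entire. $D^{0}$ has simple poles at $0$ and $2k$ with residues $-a_{0}$ and $a_{0}i^{2k}$, giving $S_{0}$ and \eqref{EQ0}. $E^{0}$ has simple poles at $s=r$ (residue $-C_{r}(-i)^{r}$) and $s=2k-r$ (residue $C_{r}(-i)^{r}i^{2k}=C_{r}i^{2k-r}$) for $k\le r\le L$; their union is $S_{E^{0}}$ and the residue sum is \eqref{EQ1}. For $E^{H}$ I use $\tfrac1s{}_{2}F_{1}[1,r;1+s;z_{0}]=\sum_{m\ge0}(r)_{m}z_{0}^{m}/(s)_{m+1}$, which has simple poles at $s=-n$ ($n\ge0$); from $\mathrm{Res}_{s=-n}(s)_{m+1}^{-1}=(-1)^{n}/(n!\,(m-n)!)$ and the re-summation $\sum_{l\ge0}(a)_{l}z^{l}/l!=(1-z)^{-a}$ one obtains the closed-form residue $\tfrac{(-1)^{n}}{n!}\tfrac{\Gamma(r+n)}{\Gamma(r)}z_{0}^{n}(1-z_{0})^{-r-n}$; then $z_{0}=(i\alpha_{j}+1)^{-1}$, $1-z_{0}=i\alpha_{j}(i\alpha_{j}+1)^{-1}$, multiplication by the prefactor $-C_{rj}(-i)^{r}(i\alpha_{j}+1)^{-r}$, and the identities $(-i)^{r}(i\alpha_{j})^{-r}=(-1)^{r}\alpha_{j}^{-r}$, $(-1)^{n}i^{-n}=i^{n}$ give the $n$-th term of \eqref{EQ2}; since $M_{j}=[k]=k$ by the result of Culp--Ressler \cite{WR} recalled above, the sum over $r$ runs to $k$, and intersecting $\{0,-1,-2,\dots\}$ with $S$ gives $S_{H}$. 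Similarly, in $E^{B}$ the factor $\Gamma(s-2k+r)$ gives simple poles at $s=2k-r-n$ ($n\ge0$, $1\le r\le k$) with residue $\tfrac{\Gamma(r+n)}{\Gamma(r)}\tfrac{(-1)^{n}}{n!}(i\alpha_{j})^{r+n}$; multiplying by $i^{2k}C_{rj}(-1/\alpha_{j})^{r}$ and using $(-1/\alpha_{j})^{r}(i\alpha_{j})^{r+n}=(-1)^{r}i^{r+n}\alpha_{j}^{n}$ and $i^{2k}i^{r+n}=i^{r+n-2k}$ yields the $n$-th term of \eqref{EQ3}, and the pole set inside $S$ is $S_{B}$. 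Finally, the second ${}_{2}F_{1}$ term of $E^{H}$ and the factor $\Gamma(2k-s)$ of $E^{B}$ contribute poles at $s\in\{2k,2k+1,\dots\}$; a computation parallel to the one for $S_{B}$ shows that (with $M_{j}=k$) these cancel in the sum $E^{H}+E^{B}$, consistently with the fact that by \eqref{Phi1} the product $\Phi(s)=(2\pi/\lambda)^{-s}\Gamma(s)\varphi(s)$ is holomorphic for $\sigma>\beta+1$; choosing $\delta\notin\mathbb{Z}$, the poles of $\Phi$ between the lines $\sigma=2k-\delta$ and $\sigma=\delta$ are therefore contained in $S_{0}\cup S_{E^{0}}\cup S_{H}\cup S_{B}$, proving (b), and the residue identities \eqref{EQ0}--\eqref{EQ3} follow from the term-by-term computations above, proving (c).

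The main obstacle is the hypergeometric summand $E^{H}$. On the analytic side one must justify the uniform estimate ${}_{2}F_{1}[1,r;1+s;z_{0}]=1+\mathcal{O}(|s|^{-1})$ over the whole strip $2k-\delta\le\sigma\le\delta$ — in particular for small, possibly negative, $\sigma$, where the Euler integral representation is unavailable, so the bound must be read off from the defining series together with the lower bound for $|(1+s)_{m}|$. On the algebraic side, the bookkeeping for (c) is delicate: reindexing the double series $\sum_{m\ge n}$, carrying out the re-summation to $(1-z_{0})^{-r-n}$, and then tracking all the powers of $i$ and of $\alpha_{j}$ correctly to match \eqref{EQ2}. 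The analogous (and somewhat more routine) computation for $E^{B}$, and the verification that the poles of the individual summands in the half-plane $\sigma>k$ do not enlarge the stated pole set, are the remaining points.
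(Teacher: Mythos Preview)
The paper does not actually supply a proof of Corollary~\ref{Cr1}: it is stated immediately after Theorem~\ref{ThM1} (whose own proof is deferred to Hawkins--Knopp~\cite{HK1992}) and before Lemma~\ref{LemPe}, with no argument given. The implicit understanding is that all three parts follow by direct inspection of the explicit decomposition $\Phi=D+D^{0}+E^{0}+E^{H}+E^{B}$, and your proof is exactly that inspection carried out in full. Your treatment is correct: the entire-ness and boundedness of $D$, the $\mathcal{O}(|t|^{-1})$ decay of $D^{0},E^{0},E^{H}$, the Stirling estimate for $E^{B}$, and the term-by-term residue computations for \eqref{EQ0}--\eqref{EQ3} are all the expected steps, and your handling of the hypergeometric piece (series expansion of ${}_{2}F_{1}$, re-summation to $(1-z_{0})^{-r-n}$, and the $i$-bookkeeping) is the right way through the only nontrivial point.

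One remark on your appeal to Culp--Ressler to force $M_{j}=k$: this is a reasonable way to reconcile the upper limit $k$ in \eqref{EQ2}--\eqref{EQ3} with the upper limit $M_{j}$ in \eqref{Ehe}, but note that the paper itself is not consistent on this point --- when these residue formulas are actually used in the proof of Theorem~\ref{thmm2} (equations \eqref{A21}--\eqref{A22}), the sums are written with upper limit $M_{j}$. So the discrepancy you are patching is a wrinkle in the source rather than in your argument.
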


Before we state and prove our main results, we state Perron's formula as Lemma 2.3 below (see   \cite{CN} for details.) We shall also use the convention of writing $\int_{(b)}$~ for ~$\int_{b-i\infty}^{b+\infty}.$
 \begin{lem}\label{LemPe}
  	Let    $\sigma_{0}$   be  the abscissa of absolute convergence for    $\displaystyle \varphi(s)=\sum_{m=1}^{\infty}a_{m}\lambda_{m}^{-s}$ and $\lbrace \lambda_{m} \rbrace$ be a  sequence of positive real numbers tending to $\infty$ as $m\rightarrow \infty$. Then for    $k\geq 0,\sigma>0$   and   $\sigma>\sigma_{0}$, 
  	\begin{equation}
  	\frac{1}{\Gamma(k+1)}{\sum_{\lambda_{m}\leq x}}^{_\prime} a_{m}\left( x-\lambda_{m}\right)^{k}=\frac{1}{2\pi i}\int_{(\sigma)}\frac{\Gamma(s)\varphi(s)x^{s+k}}{\Gamma(s+k+1)}ds,
  	\end{equation}
  	where the prime $'$ on the summation sign indicates that if    $k=0$   and   $x=\lambda_{m}$   for some positive integer   $m$,    then we count only   $\frac{1}{2}a_{m}$.
  \end{lem}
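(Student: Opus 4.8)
The plan is to reduce the asserted identity to the classical \emph{discontinuous integral}
\begin{equation*}
\frac{1}{2\pi i}\int_{(\sigma)}\frac{\Gamma(s)}{\Gamma(s+k+1)}\,y^{s}\,ds=
\begin{cases}
\dfrac{1}{\Gamma(k+1)}\left(1-\dfrac{1}{y}\right)^{k}&\text{if }y\geq 1,\\[2mm]
0&\text{if }0<y<1,
\end{cases}
\qquad (\sigma>0,\ k\geq 0),
\end{equation*}
the value at $y=1$ in the case $k=0$ being $\tfrac12$, and then to set $y=x/\lambda_{m}$, multiply through by $a_{m}x^{k}$, and sum over $m$. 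First I would record the Beta-integral identity $\Gamma(s)/\Gamma(s+k+1)=\Gamma(k+1)^{-1}\int_{0}^{1}t^{s-1}(1-t)^{k}\,dt$ (valid for $\Re s>0$ since $k\geq 0$), which exhibits the kernel as a well-defined analytic function, and I would note that Stirling's formula gives $\bigl|\Gamma(\sigma+it)/\Gamma(\sigma+it+k+1)\bigr|\asymp|t|^{-k-1}$ as $|t|\to\infty$ along $\Re s=\sigma$, so the kernel is absolutely integrable on that line precisely when $k>0$.

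To establish the discontinuous integral I would shift the vertical contour. For $y\geq 1$, push $\Re s=\sigma$ to $\Re s\to-\infty$: since $|y^{s}|=y^{\Re s}\to 0$ and the kernel decays like $|t|^{-k-1}$, the horizontal segments disappear in the limit, and the integral equals the sum of the residues of $\Gamma(s)\Gamma(s+k+1)^{-1}y^{s}$ at the poles $s=0,-1,-2,\dots$ of $\Gamma(s)$. The residue at $s=-n$ is $\dfrac{(-1)^{n}}{n!\,\Gamma(k+1-n)}\,y^{-n}=\dfrac{1}{\Gamma(k+1)}\binom{k}{n}\left(-\dfrac{1}{y}\right)^{n}$, and summing the binomial series yields $\Gamma(k+1)^{-1}(1-1/y)^{k}$; when $k\in\mathbb{Z}_{\geq 0}$ the poles of $\Gamma(s+k+1)$ cancel those of $\Gamma(s)$ for $n>k$, the series terminates, and the right-hand side is the expected polynomial. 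For $0<y<1$, push the contour to $\Re s\to+\infty$, where $|y^{s}|\to 0$ and there are no poles (since $\sigma>0$), giving $0$. The boundary value at $y=1$, $k=0$ is the classical Dirichlet integral $\frac{1}{2\pi i}\int_{(\sigma)}s^{-1}\,ds=\tfrac12$, which is exactly the source of the prime on the summation.

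Finally, I would substitute $\varphi(s)=\sum_{m}a_{m}\lambda_{m}^{-s}$ into $\frac{1}{2\pi i}\int_{(\sigma)}\Gamma(s)\varphi(s)x^{s+k}\Gamma(s+k+1)^{-1}\,ds$ and interchange summation and integration, obtaining $x^{k}\sum_{m}a_{m}\cdot\frac{1}{2\pi i}\int_{(\sigma)}\frac{\Gamma(s)}{\Gamma(s+k+1)}(x/\lambda_{m})^{s}\,ds$; the discontinuous integral then collapses this to $\Gamma(k+1)^{-1}{\sum_{\lambda_{m}\leq x}}'\,a_{m}(x-\lambda_{m})^{k}$, which is the claim. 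The main obstacle is the legitimacy of this interchange. For $k>0$ it is immediate by dominated convergence, since $\int_{(\sigma)}\bigl|\Gamma(s)/\Gamma(s+k+1)\bigr|\,|ds|<\infty$ and $\sum_{m}|a_{m}|\lambda_{m}^{-\sigma}<\infty$ for $\sigma>\sigma_{0}$. For $k=0$ the line integral converges only conditionally, so one must instead invoke a truncated Perron formula---cut the contour at height $\pm iT$, bound the resulting tail uniformly, interchange the remaining finite sum with the truncated integral, and let $T\to\infty$---or, equivalently, integrate once by parts in $s$ to manufacture the missing $|t|^{-1}$ of decay. Since $\lambda_{m}\to\infty$, the sum $\sum_{\lambda_{m}\leq x}$ is finite for each fixed $x$, so no convergence question arises on the arithmetic side.
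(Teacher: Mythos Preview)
Your argument is correct and is the classical route to Perron's formula: establish the discontinuous Mellin--Barnes integral for the kernel $\Gamma(s)/\Gamma(s+k+1)$ by contour shifting and residues, then insert the Dirichlet series and interchange, with the $k=0$ case handled via truncation. There is nothing to compare against, however, because the paper does not prove Lemma~\ref{LemPe} at all; it merely states the result and refers the reader to Chandrasekharan--Narasimhan \cite{CN} for details. Your sketch is essentially the argument one finds there, so in that sense you have reconstructed the intended proof rather than diverged from it.
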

  The evaluation of  the integral  in (2.18) of Lemma 2.3, we consider a positively oriented rectangular contour formed by 
    $$[(2k-\sigma)-iT, \sigma-iT], \  [\sigma-iT, \sigma+iT],  \  [\sigma+iT, (2k-\sigma)+iT], \ [(2k-\sigma)+iT, (2k-\sigma)-iT],$$   use Stirling's approximation formula for the gamma function, Phragmen-Lindel$\ddot{o}$f theorem \cite{BC} and apply  Cauchy Residue theorem where in all cases the parameters are choosen appropriately to satisfy the conditions. Note that we use this in several places and cite it as  \cite{BC}
 \maketitle
\def\theequation{\thesection.\arabic{equation}} 
 \section{Main Results}
\setcounter{equation}{0}

 In this section, we shall use  the  techniques of Chandrasekharan and Narasimhan in \cite{CN} to  extend the first result in \cite{SISANN} to entire automorphic integrals on discrete Hecke  groups $G(\lambda).$ \\

\begin{thm}\label{thmm2} $\left(First Equivalence\right).$ Let   $\Phi(s)$   and   $R(s)$ be   as in    Theorem \ref{ThM1}.   Then the functional equation
	\begin{equation}\label{thmmIl}
	\Phi(2k-s)-i^{2k}\Phi(s)=R(s)
	\end{equation}
	is equivalent to the identity 
	\begin{equation}\label{thmmI}
	\frac{1}{\Gamma(\rho+1)}{\sum_{0\leq m\leq x}}^\prime a_m(x-m)^{\rho}= \Lambda_1(x)+  \Lambda_2(x)+ \Lambda_3(x)+ \Lambda_4(x)+ \Lambda_5(x),~~ where
		\end{equation}
	  
	\begin{eqnarray*} \Lambda_1(x) &=&i^{-2k}\left(\frac{2\pi}{\lambda} \right)^{-\rho} \sum_{m= 1}^{\infty}a_m\left( \frac{x}{m}\right) ^{\frac{\rho+2k}{2}}J_{\rho+2k}\left(\frac{4\pi\sqrt{mx}}{\lambda}\right) \nonumber  \\
		\Lambda_2(x)&=& i^{2k}\left( \frac{2\pi}{\lambda}\right)^{2k}\frac{a_{0}}{\Gamma(2k+\rho+1)}x^{2k+\rho} \nonumber \\
		\Lambda_3(x)&=& -\sum_{j=1}^{p}\sum_{r=1}^{M_{j}}C_{rj}\left( \frac{-1}{\alpha}_{j}\right)^{r}\frac{(i\alpha_{j})^{r}\left( \frac{2\pi}{\lambda}\right)^{r}x^{r+\rho}}{\Gamma(r+\rho+1)}{_{1}F_{1}}\left( r,r+\rho+1;.\frac{-i\alpha_{j}2\pi x}{\lambda}\right)\nonumber \\  
		\Lambda_4(x)&=& \sum_{j=1}^{p}\sum_{r=1}^{M_{j}}C_{rj}\left( \frac{-1}{\alpha}_{j}\right)^{r}\frac{(i)^{-2k}\left( \frac{2\pi}{\lambda}\right)^{2k}x^{2k+\rho}}{\Gamma(2k+\rho+1)} {_{1}F_{1}}\left( r,2k+\rho+1;\frac{-2\pi x}{i\alpha_{j}\lambda}\right)\nonumber  \\
		\Lambda_5(x)&=& -\sum_{m=k}^{L}\left\lbrace \frac{(-i)^{m}\left( \frac{2\pi}{\lambda}\right)^{m}x^{m+\rho}}{\Gamma(m+\rho+1)}-\frac{i^{2k-m}\left(\frac{2\pi}{\lambda}\right)^{2k-m}x^{2k-m+\rho}}{\Gamma(2k-m+\rho+1)}\right\rbrace, \end{eqnarray*}
	$x>0$,    $ \rho\geq 2\beta-2k-\frac{1}{2}$, and    $\beta$   is a number for which $\displaystyle \sum_{m=1}^{\infty}\frac{\vert a_m\vert}{m^{\beta}}<\infty.$
\end{thm}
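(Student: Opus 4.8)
The plan is to adapt the Chandrasekharan--Narasimhan technique, as used by Sister Ann M.\ Heath \cite{SISANN}, combining Perron's formula (Lemma \ref{LemPe}) with the meromorphic continuation, the growth estimates, and the explicit residues for $\Phi$ supplied by Theorem \ref{ThM1} and Corollary \ref{Cr1}. For the forward implication, assume (\ref{thmmIl}). Apply Lemma \ref{LemPe} with $\lambda_{m}=m$ and with $\rho$ in place of $k$; since $\sum_{m}|a_{m}|m^{-\beta}<\infty$ the abscissa of absolute convergence $\sigma_{0}$ of $\varphi$ satisfies $\sigma_{0}\le\beta$, and for any $\sigma>\sigma_{0}$,
\[
\frac{1}{\Gamma(\rho+1)}{\sum_{1\le m\le x}}^{\prime}a_{m}(x-m)^{\rho}
=\frac{1}{2\pi i}\int_{(\sigma)}\frac{\Gamma(s)\varphi(s)x^{s+\rho}}{\Gamma(s+\rho+1)}\,ds
=\frac{1}{2\pi i}\int_{(\sigma)}h(s)\,ds ,
\]
where $h(s):=\left(\tfrac{2\pi}{\lambda}\right)^{s}\Phi(s)x^{s+\rho}/\Gamma(s+\rho+1)$ after using $\Gamma(s)\varphi(s)=(2\pi/\lambda)^{s}\Phi(s)$; adding $a_{0}x^{\rho}/\Gamma(\rho+1)$ to both sides folds the $m=0$ term into the sum on the left. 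Now fix $\sigma=\delta$ large and non-integral, as in Corollary \ref{Cr1}, and move the line of integration from $\operatorname{Re}s=\sigma$ to $\operatorname{Re}s=2k-\sigma$. The contributions of the horizontal segments tend to $0$ as $T\to\infty$ by the bound of Corollary \ref{Cr1}(a) and Stirling's estimate for $1/\Gamma(s+\rho+1)$ --- this, together with the decay of $\Phi$ and of $R$ on vertical lines, is where the hypothesis $\rho\ge 2\beta-2k-\tfrac12$ is used (cf.\ \cite{BC}) --- so Cauchy's residue theorem gives
\[
\text{LHS of }(\ref{thmmI})=\frac{a_{0}x^{\rho}}{\Gamma(\rho+1)}+\sum_{s_{0}}\operatorname{Res}_{s=s_{0}}h(s)+\frac{1}{2\pi i}\int_{(2k-\sigma)}h(s)\,ds ,
\]
the sum running over the poles $s_{0}$ of $\Phi$ inside the strip, which by Corollary \ref{Cr1}(b) lie in $S_{0}\cup S_{E^{0}}\cup S_{H}\cup S_{B}$.

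In the last integral, insert the functional equation in the form $\Phi(s)=i^{2k}\Phi(2k-s)+R(2k-s)$, where $R(2k-s)=E^{B}(s)-i^{2k}E^{B}(2k-s)$, and split. In the $\Phi(2k-s)$ part, substitute $s\mapsto 2k-w$, expand $\Phi(w)=(2\pi/\lambda)^{-w}\Gamma(w)\sum_{m\ge1}a_{m}m^{-w}$, interchange sum and integral (valid on $\operatorname{Re}w=\sigma>\sigma_{0}$), and evaluate each resulting integral by the classical formula for the Bessel function used in \cite{CN},
\[
\frac{1}{2\pi i}\int_{(\sigma)}\frac{\Gamma(w)}{\Gamma(\nu+1-w)}\,Y^{-w}\,dw=Y^{-\nu/2}J_{\nu}\!\left(2\sqrt{Y}\right),\qquad\nu=2k+\rho,\quad Y=\left(\tfrac{2\pi}{\lambda}\right)^{2}mx ;
\]
since $i^{2k}=i^{-2k}$ this reproduces exactly $\Lambda_{1}(x)$. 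In the $R$ part, substitute the explicit form (\ref{Ebe}) of $E^{B}$ --- writing each Beta factor as $\Gamma(2k-s)\Gamma(r-2k+s)/\Gamma(r)$ --- and evaluate the integral against $1/\Gamma(s+\rho+1)$ by moving the contour to the right and summing residues over the arithmetic progressions of poles of the Gamma factors (the superexponential decay of $1/\Gamma(s+\rho+1)$ annihilates the shifted integral); via the Mellin--Barnes representation of the confluent hypergeometric function these residue series assemble into the ${}_{1}F_{1}$ terms of $\Lambda_{3}(x)$ and $\Lambda_{4}(x)$, one coming from $E^{B}(s)$ and the other from $E^{B}(2k-s)$. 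Finally evaluate $\sum_{s_{0}}\operatorname{Res}h$ by Corollary \ref{Cr1}(c): the pole of $D^{0}$ at $s=0$ cancels the added $a_{0}x^{\rho}/\Gamma(\rho+1)$; the pole of $D^{0}$ at $s=2k$ gives $\Lambda_{2}(x)$; the poles of $E^{0}$ give $\Lambda_{5}(x)$; and the poles of $E^{H}$ and $E^{B}$ in the strip furnish the finite partial sums which, together with the series from the $R$ part, complete $\Lambda_{3}(x)$ and $\Lambda_{4}(x)$. Collecting all contributions, and checking the constant and exponent identities, yields (\ref{thmmI}).

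For the converse, assume (\ref{thmmI}) and fix $\rho$ sufficiently large. Viewing both sides as functions of $x>0$, one reverses the chain by the Mellin transform in $x$: after removing the $m=0$ term the left side has Mellin transform $\Gamma(s)\varphi(s)/\Gamma(s+\rho+1)$, equivalently $(2\pi/\lambda)^{-s}\Phi(s)/\Gamma(s+\rho+1)$, in a right half-plane, while the transforms of $\Lambda_{1},\dots,\Lambda_{5}$, obtained from the same Bessel-- and ${}_{1}F_{1}$--Mellin--Barnes identities read backwards, reproduce the corresponding kernel multiples of $\Phi(2k-s)$ and of $i^{2k}\Phi(s)+R(s)$. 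Since no single vertical strip of convergence contains all these transforms, one splits the $x$-integral at $x=1$ (where the left side equals $a_{0}x^{\rho}/\Gamma(\rho+1)$ identically), obtaining pieces that converge in half-planes and continue meromorphically; matching the meromorphic continuations and using the injectivity of the Mellin transform yields $\Phi(2k-s)-i^{2k}\Phi(s)=R(s)$ on a vertical strip, and hence on all of $\mathbb{C}$ by analytic continuation, both sides being meromorphic there.

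The principal obstacle is the bookkeeping in the forward direction: one must identify exactly which poles fall inside the strip of integration and show that the finite residue sums of Corollary \ref{Cr1}(c) from $E^{H}$ and $E^{B}$ dovetail with the residues picked up when the $R$-contour is moved, recognize the precise ${}_{1}F_{1}$ Mellin--Barnes integrals, and verify the many constant identities (for instance $i^{2k}=i^{-2k}$ and $1/(i\alpha_{j})=-i/\alpha_{j}$) so that the whole collapses to the stated $\Lambda_{3}$ and $\Lambda_{4}$. A secondary, analytic, obstacle is keeping every contour shift and interchange of limits uniform: because $1/\Gamma(s+\rho+1)$ grows along horizontal segments, these manipulations genuinely require the vertical-line decay of $\Phi$ and $R$ from Corollary \ref{Cr1}(a) together with the lower bound on $\rho$.
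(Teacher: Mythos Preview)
Your forward implication is essentially the paper's argument: Perron's formula, shift the contour from $\operatorname{Re}s=\sigma$ to $\operatorname{Re}s=2k-\sigma$, insert the functional equation on the new line, recognise the Bessel integral for $\Lambda_1$, evaluate the $R$-integral via Beta/Gamma residues to produce the ${}_1F_1$ pieces, and check that the residue contributions from $E^H$ and $E^B$ (Corollary \ref{Cr1}(c)) cancel the finite tails left over from the $R$-contour evaluation. The paper organises the cancellation by writing $A_1(x)=I(x)-H(x)$ and $A_2(x)$ separately and displaying both in full, but the content is the same.

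Your converse, however, takes a genuinely different route from the paper. You propose to undo the forward chain by a Mellin transform in $x$, splitting the $x$-integral at $x=1$ and matching meromorphic continuations. The paper instead multiplies (\ref{thmmI}) by $y^{\rho+1}e^{-xy}$ and integrates over $x\in(0,\infty)$, i.e.\ takes a Laplace transform; each of the six terms $L_1,\dots,L_6$ is evaluated in closed form (the Bessel term via the standard Laplace--Bessel identity, the ${}_1F_1$ terms via their series), and the result is precisely the relation $(iy\lambda/2\pi)^{-2k}F(2\pi i/(y\lambda))=F(iy\lambda/2\pi)+q(iy\lambda/2\pi)$, valid for $y$ real and large. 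The identity theorem then extends this to $z^{-2k}F(-1/z)=F(z)+q(z)$ on $\mathcal H$, and the functional equation follows from the Hecke correspondence of Theorem \ref{ThM1}. The paper's route avoids having to analytically continue several Mellin pieces and match them across strips; it reduces the converse to explicit one-variable integrals and then quotes the already-proved correspondence. Your Mellin approach is in principle viable, but as written it is underdeveloped at exactly the delicate point: you would need to exhibit a common region where the transforms of both sides of (\ref{thmmI}) agree and then argue that the resulting identity of meromorphic functions forces (\ref{thmmIl}), which is not immediate because the $\Lambda_i$ have Mellin transforms living in disjoint half-planes. If you pursue this line, make that continuation argument explicit; otherwise the Laplace-to-automorphic-relation route is both shorter and cleaner.
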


   \begin{proof}
   We use \cite{BC} to arrive at
   \begin{align}
   & \frac{1}{2\pi i}\int_{(\sigma)}  \frac{\Gamma(s)\varphi(s)x^{s+\rho}}{\Gamma(s+\rho+1)}ds=\sum_{s\in\text{ Pole Set}} \text{Res} \left\{ \frac{\Gamma(s)\varphi(s) x^{s + \rho}}{\Gamma(s +
  	\rho + 1)} \right\}.\label{res1}
  	\end{align}
   Hence by using again \cite{BC}~$(\ref{res1})$ can now be written as 	
  	
  \begin{equation}\label{twoints} \frac{1}{2\pi i}\displaystyle\int_{(\sigma)}\frac{\left( \frac{2\pi}{\lambda}\right)^{s}\Phi(s)x^{s+\rho}}{\Gamma(s+\rho+1)}ds= A_1(x) + A_2(x),~~~where \end{equation}

  \begin{eqnarray}A_{1}(x)&=&\frac{1}{2\pi i}\int_{(2k-\sigma)}\frac{\left( \frac{2\pi}{\lambda}\right)^{s}\Phi(s)x^{s+\rho}}{\Gamma(s+\rho+1)}ds \label{A1}\\  
  A_{2}(x)&=& \sum \text{Res}\Bigg\lbrace \frac{\left( \frac{2\pi}{\lambda}\right)^{s}\Phi(s)x^{s+\rho}}{\Gamma(s+\rho+1)}\Bigg\rbrace .\label{A2}
  \end{eqnarray} 
  	We now show that the functions $A_1(x)$ and $A_2(x)$ can be expressed respectively as 
	
  	\begin{align}\label{A21}
  	A_{1}(x)&=i^{-2k}\left( \frac{2\pi}{\lambda}\right)^{-\rho}\sum_{m=1}^{\infty}a_{m}\left( \frac{x}{m}\right)^{\frac{2k+\rho}{2}}J_{2k+\rho}\left( \frac{4\pi\sqrt{mx}}{\lambda}\right)\nonumber \\
  	&+\sum_{j=1}^{p}\sum_{r=1}^{M_{j}}C_{rj}\left( \frac{-1}{\alpha_{j}}\right)^{r}\sum_{m=0}^{[\delta-2k]}\frac{(-1)^{m}}{m!}\frac{\Gamma(m+r)}{\Gamma(r)}\frac{\left( \frac{2\pi i\alpha_{j}}{\lambda x}\right)^{-m}x^{\rho}}{\Gamma(-m+\rho+1)}\nonumber \\
  	&-\sum_{j=1}^{p}\sum_{r=1}^{M_{j}}C_{rj}\left( \frac{-1}{\alpha_{j}}\right)^{r}\frac{\left( \frac{-2\pi xi}{\lambda }\right)^{r}x^{\rho}}{\Gamma(r+\rho+1)}{_{1}F_{1}}\left( r,\rho+r+1;\frac{-2\pi \alpha_{j}x}{\lambda}\right)\nonumber \\
  	&-\sum_{j=1}^{p}\sum_{r=1}^{M_{j}}C_{rj}\left( \frac{-1}{\alpha_{j}}\right)^{r}\sum_{m=0}^{[\delta-r]}\frac{(-1)^{m}}{m!}\frac{\Gamma(m+r)}{\Gamma(r)}\frac{i^{m+r-2k}\alpha_{j}^{m+r}\left( \frac{2\pi x}{\lambda }\right)^{2k-m-r}x^{\rho}}{\Gamma(2k-m-r+\rho+1)}\nonumber \\
  	&+\sum_{j=1}^{p}\sum_{r=1}^{M_{j}}C_{rj}\left( \frac{-1}{\alpha_{j}}\right)^{r}\frac{\left( \frac{2\pi x}{\lambda i}\right)^{2k}x^{\rho}}{\Gamma(2k+\rho+1)}{_{1}F_{1}}\left( r,\rho+2k+1;\frac{- 2\pi x}{i\lambda \alpha_{j}}\right).
  	\end{align} 
  	\begin{align}\label{A22}
  	A_{2}(x)&=\frac{a_{0}i^{2k}x^{2k+\rho}(\frac{2\pi}{\lambda})^{2k}}{\Gamma(2k+\rho+1)}-\frac{a_{0}x^{\rho}}{\Gamma(\rho+1)} \nonumber \\
  	&+ \sum_{m=k}^{L} C_{m}\Bigg\lbrace \frac{-(-i)^{m}(\frac{2\pi}{\lambda})^{m}x^{m+\rho}}{\Gamma(m+\rho+1)}+\frac{(i)^{2k-m}(\frac{2\pi}{\lambda})^{2k-m}x^{2k-m+\rho}}{\Gamma(2k-m+\rho+1)}\Bigg\rbrace \nonumber \\
  	&-\sum_{j=1}^{p}\sum_{r=1}^{M_{j}}C_{rj}\Bigg\lbrace\sum_{m=0}^{[\delta]-2k}\frac{\Gamma(m+r)}{\Gamma(r)}\frac{(-1)^{m}}{m!}i^{m}\alpha_{j}^{-r-m}\frac{(\frac{2\pi}{\lambda})^{-m}x^{-m+\rho}}{\Gamma(-m+\rho+1)}\Bigg\rbrace \nonumber \\
  	&+\sum_{j=1}^{p}\sum_{r=1}^{M_{j}}C_{rj}\Bigg\lbrace\sum_{m=0}^{[\delta]-r}\frac{\Gamma(m+r)}{\Gamma(r)}\frac{(-1)^{m+r}}{m!}i^{m+r-2k}\alpha_{j}^{m}\frac{(\frac{2\pi}{\lambda})^{2k-m-r}x^{2k-m-r+\rho}}{\Gamma(2k-m-r+\rho+1)}\Bigg\rbrace.
  	\end{align}
  	
  Observe that using the  functional equation $\Phi(2k-s)-i^{2k}\Phi(s)=R(s)$, we have  
  \[  A_{1}(x)= \frac{1}{2\pi i}\int_{(2k-\sigma)} \frac{\left(\frac{2\pi}{\lambda}\right)^{s}i^{-2k}\Phi(2k-s)x^{s+\rho}}{\Gamma(s+\rho+1)}ds- \frac{1}{2\pi i}\int_{(2k-\sigma)} \frac{\left(\frac{2\pi}{\lambda}\right)^{s}i^{-2k}R(s)x^{s+\rho}}{\Gamma(s+\rho+1)}ds.\]
      Since $\Phi(s)=\left( \frac{2\pi}{\lambda}\right)^{-s}\Gamma(s)\varphi(s),$ and denoting the first integral by $I(x)$ we see that  
  \begin{align*}
  I(x)&=\frac{1}{2\pi i}\int_{(2k-\sigma)}\frac{\left(\frac{2\pi}{\lambda} \right)^{2s-2k}i^{-2k}\Gamma(2k-s)\varphi(2k-s)x^{s+\rho}}{\Gamma(s+\rho+1)}ds\\
  &=\frac{1}{2\pi i}\int_{(\sigma)}\frac{\left(\frac{2\pi}{\lambda} \right)^{-2s+2k}i^{-2k}\Gamma(s)\varphi(s)x^{-s+\rho+2k}}{\Gamma(2k-s+\rho+1)}ds \\
  &= i^{-2k}\left( \frac{2\pi}{\lambda}\right)^{2k}x^{2k+\rho}\frac{1}{2\pi i}\sum_{m=1}^{\infty}a_{m}\int_{(\sigma)}\frac{\Gamma(s)\left( \frac{4\pi^{2}x m}{\lambda^{2}}\right)^{-s}}{\Gamma(2k-s+\rho+1)}ds,
  \end{align*} where we have used change of  variable from  $s$   to   $2k-s ,$  in the first integral.   Letting    $\frac{w}{2}=s, ~  \nu=2k+\rho,$   and simplifying expressions, we obtain
      \begin{align*}
  I(x)&=i^{-2k}\left( \frac{2\pi}{\lambda}\right)^{\nu-\rho} 2^{\nu} x^{\nu} \sum_{m=1}^{\infty}a_{m}\frac{1}{2\pi i} \int_{(2\sigma)}\frac{\Gamma(\frac{w}{2})\left( \frac{4\pi \sqrt{mx}}{\lambda}\right)^{-w}}{\Gamma(\nu-\frac{w}{2}+1)}2^{w-\nu-1}dw.\\
&=  i^{-2k}\left( \frac{2\pi}{\lambda}\right)^{-\rho}\sum_{m=1}^{\infty}a_{m}\left( \frac{x}{m}\right)^{\frac{2k+\rho}{2}}J_{2k+\rho}\left(\frac{4\pi\sqrt{mx}}{\lambda} \right),
   \end{align*} provided that   $\rho>\sigma-2k$   and    $\sigma>2k,~~   k\in\mathbb{Z}$. Note also that we have used the definition of the Bessel J-function here. Now we evaluate the second integral denoting by $H(x)$ and using the expression for $R(x)$ as follows $$R(s)=i^{2k}\sum_{j=1}^{P}\sum_{r=1}^{M_{j}}C_{ri}\left( \frac{-1}{\alpha_{j}}\right)^{r}\bigg\lbrace\left(i\alpha_{j} \right)^{s}B(s,r-s)-i^{-s}\alpha_{j}^{2k-s}B(2k-s,r-(2k-s))\bigg\rbrace, $$ 
     $$ H(x)=\frac{1}{2\pi i}\int_{(2k-\sigma)}\frac{\left( \frac{2\pi}{\lambda}\right)^{s}i^{-2k}R(s)x^{s+\rho}}{\Gamma(s+\rho+1)}ds $$
         \begin{align*}
   H(x)=\sum_{j=1}^{P}\sum_{r=1}^{M_{j}}C_{ri}\left( \frac{-1}{\alpha_{j}}\right)^{r}\Bigg\lbrace \frac{1}{2\pi i}\int_{(2k-\sigma)}\frac{\left( \frac{2\pi i\alpha_{j}}{\lambda}\right)^{s} B(s,r-s)x^{s+\rho}}{\Gamma(s+\rho+1)}ds\\
  -\alpha_{j}^{2k-s}\frac{1}{2\pi i}\int_{(2k-\sigma)}\frac{\left( \frac{2\pi }{\lambda i}\right)^{s} B(2k-s,r-(2k-s))x^{s+\rho}}{\Gamma(s+\rho+1)}ds\Bigg\rbrace.
  \end{align*} 
Using the properties of the beta function and after some algebraic manipulations, we have  
  	\begin{align*}
  	\frac{1}{2\pi i}\int_{(2k-\sigma)}\frac{\left( \frac{2\pi i\alpha_{j}}{\lambda}\right)^{s} B(s,r-s)x^{s+\rho}}{\Gamma(s+\rho+1)}ds
  	&=-\sum_{m=0}^{[\sigma]-2k}\frac{(-1)^{m}\left( \frac{2\pi i\alpha_{j}}{\lambda}\right)^{-m}x^{-m+\rho}\Gamma(m+r)}{m!\Gamma(-m+\rho+1)\Gamma(r)}\\
  	&-\sum_{m=0}^{\infty}\frac{(-1)^{m}\left( \frac{2\pi i\alpha_{j}}{\lambda}\right)^{m+r}x^{m+r+\rho}\Gamma(m+r)}{m!\Gamma(m+r+\rho+1)\Gamma(r)}\\
  	&=-\sum_{m=0}^{[\sigma]-2k}\frac{(-1)^{m}\left( \frac{2\pi i\alpha_{j}}{\lambda}\right)^{-m}x^{-m+\rho}\Gamma(m+r)}{m!\Gamma(-m+\rho+1)\Gamma(r)}\\
  	&-\frac{\left( \frac{2\pi i\alpha_{j}x}{\lambda}\right)^{r}x^{\rho}}{\Gamma(\rho+r+1)}{_{1}F_{1}}\left(r,\rho+r+1; -\frac{2\pi i\alpha_{j}x}{\lambda} \right),~~and
  	\end{align*}
  	\begin{align*}
  	\frac{1}{2\pi i}\int_{(2k-\sigma)}\frac{\left( \frac{2\pi }{\lambda i}\right)^{s} B(2k-s,r-(2k-s))x^{s+\rho}}{\Gamma(s+\rho+1)}ds &=-\sum_{m=0}^{[\delta-r]}\left(\frac{2\pi}{i\lambda} \right)^{2k-m-r}\frac{\alpha_{j}^{m+r}}{\Gamma(2k-m-r+\rho+1)}\frac{(-1)^{m}}{m!}\frac{\Gamma(m+r)}{\Gamma(r)}x^{2k-m-r+\rho}\\
  	&-\sum_{m=0}^{\infty}\left(\frac{2\pi}{i\lambda} \right)^{2k+m}\frac{\alpha_{j}^{-m}}{\Gamma(2k+m+\rho+1)}\frac{(-1)^{m}}{m!}\frac{\Gamma(m+r)}{\Gamma(r)}x^{2k+m+\rho}.
  	\end{align*} Using the properties of hypergeometric series and simplifying, we get \begin{align*}
  H(x)=&-\sum_{j=1}^{p}\sum_{r=1}^{M_{j}}C_{rj}\left( \frac{-1}{\alpha_{j}}\right)^{r}\sum_{m=0}^{[\delta-2k]}\frac{(-1)^{m}}{m!}\frac{\Gamma(m+r)}{\Gamma(r)}\frac{\left( \frac{2\pi i\alpha_{j}}{\lambda x}\right)^{-m}x^{\rho}}{\Gamma(-m+\rho+1)}\\
  &+\sum_{j=1}^{p}\sum_{r=1}^{M_{j}}C_{rj}\left( \frac{-1}{\alpha_{j}}\right)^{r}\frac{\left( \frac{-2\pi xi}{\lambda }\right)^{r}x^{\rho}}{\Gamma(r+\rho+1)}{_{1}F_{1}}\left( r,\rho+r+1;\frac{-2\pi \alpha_{j}x}{\lambda}\right)\\
  &+\sum_{j=1}^{p}\sum_{r=1}^{M_{j}}C_{rj}\left( \frac{-1}{\alpha_{j}}\right)^{r}\sum_{m=0}^{[\delta-r]}\frac{(-1)^{m}}{m!}\frac{\Gamma(m+r)}{\Gamma(r)}\frac{i^{m+r-2k}\alpha_{j}^{m+r}\left( \frac{2\pi x}{\lambda }\right)^{2k-m-r}x^{\rho}}{\Gamma(2k-m-r+\rho+1)}\\
  &-\sum_{j=1}^{p}\sum_{r=1}^{M_{j}}C_{rj}\left( \frac{-1}{\alpha_{j}}\right)^{r}\frac{\left( \frac{2\pi x}{\lambda i}\right)^{2k}x^{\rho}}{\Gamma(2k+\rho+1)}{_{1}F_{1}}\left( r,\rho+2k+1;\frac{- 2\pi x}{i\lambda \alpha_{j}}\right). 
  \end{align*}
Since    $A_{1}(x)=I(x)-H(x)$,   we have proved (\ref{A21}) and proceed to computing $A_{2}(x)$ as follows.
          $$A_{2}(x)=\sum_{s\in polset~ of\Phi(s)}\text{Res}\Bigg\lbrace \frac{\left( \frac{2\pi}{\lambda}\right)^{s}\Phi(s)x^{s+\rho}}{\Gamma(s+\rho+1)}\Bigg\rbrace.$$
 From theorem $\ref{ThM1},$~  $\Phi(s)$ has been expressed in terms of $D^{0}(s),~E^{0}(s),~E^{H}(s)$, and $E^{B}(s)$. Thus $A_2$ can be expressed as follows
  \begin{align*}
  A_{2}(x)=\bigg\lbrace Res[D^{0}(s)]+Res[E^{0}(s)]+Res[E^{H}(s)]+Res[E^{B}(s)]\bigg\rbrace \frac{\left( \frac{2\pi}{\lambda}\right)^{s}x^{s+\rho}}{\Gamma(s+\rho+1)}.
  \end{align*}
  Using formula $(\ref{EQ0}),(\ref{EQ1}),(\ref{EQ2})$and $(\ref{EQ3})$ we obtain the following and hence (\ref{A22}) is proved.
  
  \begin{align*}
  A_{2}(x)&=\frac{a_{0}i^{2k}x^{2k+\rho}(\frac{2\pi}{\lambda})^{2k}}{\Gamma(2k+\rho+1)}-\frac{a_{0}x^{\rho}}{\Gamma(\rho+1)} \nonumber \\
  &+ \sum_{m=k}^{L} C_{m}\Bigg\lbrace \frac{-(-i)^{m}(\frac{2\pi}{\lambda})^{m}x^{m+\rho}}{\Gamma(m+\rho+1)}+\frac{(i)^{2k-m}(\frac{2\pi}{\lambda})^{2k-m}x^{2k-m+\rho}}{\Gamma(2k-m+\rho+1)}\Bigg\rbrace \nonumber \\
  &-\sum_{j=1}^{p}\sum_{r=1}^{M_{j}}C_{rj}\Bigg\lbrace\sum_{m=0}^{[\delta]-2k}\frac{\Gamma(m+r)}{\Gamma(r)}\frac{(-1)^{m}}{m!}i^{m}\alpha_{j}^{-r-m}\frac{(\frac{2\pi}{\lambda})^{-m}x^{-m+\rho}}{\Gamma(-m+\rho+1)}\Bigg\rbrace \nonumber \\
  &+\sum_{j=1}^{p}\sum_{r=1}^{M_{j}}C_{rj}\Bigg\lbrace\sum_{m=0}^{[\delta]-r}\frac{\Gamma(m+r)}{\Gamma(r)}\frac{(-1)^{m+r}}{m!}i^{m+r-2k}\alpha_{j}^{m}\frac{(\frac{2\pi}{\lambda})^{2k-m-r}x^{2k-m-r+\rho}}{\Gamma(2k-m-r+\rho+1)}\Bigg\rbrace
  \end{align*}      Finally,   we  now express the integral in the right-hand side of (3.4) as
  \begin{align*}
  \frac{1}{2\pi i}\int_{(\delta)}\frac{\Gamma(s)\varphi(s)x^{s+\rho}}{\Gamma(s+\rho+1)}ds=A_{1}(x)+A_{2}(x)= \left( I(x)-H(x)\right)+A_{2}(x),
  \end{align*}
  and using the expressions for $ A_{1}(x)$ and $A_{2}(x).$ [See $(\ref{A21})$ and $(\ref{A22})$ respectively] we obtain
  \begin{align}\label{Iden2}
  \frac{1}{2\pi i}\int_{(\delta)}\frac{\Gamma(s)\varphi(s)x^{s+\rho}}{\Gamma(s+\rho+1)}ds &=  i^{-2k}\left(\frac{2\pi}{\lambda} \right)^{-\rho} \sum_{m= 1}^{\infty}a_m\left( \frac{x}{m}\right) ^{\frac{\rho+2k}{2}}J_{\rho+2k}\left(\frac{4\pi\sqrt{mx}}{\lambda}\right) \nonumber  \\
  &+i^{2k}\left( \frac{2\pi}{\lambda}\right)^{2k}\frac{a_{0}}{\Gamma(2k+\rho+1)}x^{2k+\rho}-\frac{a_{0}x^{\rho}}{\Gamma(\rho+1)} \nonumber \\
  & -\sum_{j=1}^{p}\sum_{r=1}^{M_{j}}C_{rj}\left( \frac{-1}{\alpha}_{j}\right)^{r}\frac{(i\alpha_{j})^{r}\left( \frac{2\pi}{\lambda}\right)^{r}x^{r+\rho}}{\Gamma(r+\rho+1)}{_{1}F_{1}}\left( r,r+\rho+1;.\frac{-i\alpha_{j}2\pi x}{\lambda}\right)\nonumber \\  
  & +\sum_{j=1}^{p}\sum_{r=1}^{M_{j}}C_{rj}\left( \frac{-1}{\alpha}_{j}\right)^{r}\frac{(i)^{-2k}\left( \frac{2\pi}{\lambda}\right)^{2k}x^{2k+\rho}}{\Gamma(2k+\rho+1)} {_{1}F_{1}}\left( r,2k+\rho+1;\frac{-2\pi x}{i\alpha_{j}\lambda}\right)\nonumber  \\
  &-\sum_{m=k}^{L}\left\lbrace \frac{(-i)^{m}\left( \frac{2\pi}{\lambda}\right)^{m}x^{m+\rho}}{\Gamma(m+\rho+1)}-\frac{i^{2k-m}\left(\frac{2\pi}{\lambda}\right)^{2k-m}x^{2k-m+\rho}}{\Gamma(2k-m+\rho+1)}\right\rbrace .
  \end{align}
  Therefore,  for $\rho\geq 0,$~ and~$\frac{\rho+2k}{2}>\delta$ we get the identity  $(\ref{thmmI})$. \\
  
  Now we prove the converse of the theorem. To this end suppose  $F(z)$ is an entire automorphic integral with  a Fourier series expansion for $z\in\mathcal{H}$ and  satisfies the relation 
  $$ F(z)=\sum_{m=0}^{\infty} a_{m}e^{2\pi imz/\lambda},$$  
  \begin{equation}\label{con}
  z^{-2k}F\left(\frac{-1}{z} \right)=F(z)+q(z),
  \end{equation}
  where $q(z)$ is  the rational period function given by Lemma $(\ref{LEMA2})$. Then by $(\ref{con})$~ and the Fourier expansion of $F$ we have $$\displaystyle z^{-2k}\sum_{m=0}^{\infty} a_{m}e^{-2\pi im/\lambda z}=\sum_{m=0}^{\infty} a_{m}e^{2\pi imz/\lambda}+q(z).$$ Letting $z=\frac{iy\lambda}{2\pi}, ~~y>0,$ then we get
  \begin{align}\label{con2}
  \left(\frac{2\pi}{iy\lambda} \right)^{2k}\sum_{m=0}^{\infty}a_{m}e^{\frac{-4\pi^{2}m}{y\lambda^{2}}}=\sum_{m=0}^{\infty}a_{m}e^{-my} +q\left( \frac{iy\lambda}{2\pi}\right).
  \end{align}
  To prove the converse it suffices to show that $(\ref{iden2})$ implies $(\ref{con2}).$ To this end we consider six integrals defining $L_{1}(y)\cdots,L_{6}(y),$ corresponding to the six expressions occurring in $(\ref{Iden2})$ . We evaluate all the integrals by interchanging integration and summation which can be justified.   
  \begin{align*}
  L_{1}(y)&=\int_{0}^{\infty}\Bigg\lbrace\frac{1}{\Gamma(\rho+1)}\sum_{0\leq m\leq x} a_{m}(x-m)^{\rho}\Bigg\rbrace y^{\rho+1}e^{-xy}dx,\\
  &=\frac{a_{0}y^{\rho+1}}{\Gamma(\rho+1)}\int_{0}^{\infty}e^{-xy}x^{\rho}dx+\frac{y^{\rho+1}}{\Gamma(\rho+1)}\sum_{1\leq m\leq x} a_{m}\int_{0}^{\infty} (x-m)^{\rho}e^{-xy}dx,\\
  &=a_{0}+\sum_{m=1}^{\infty}a_{m}e^{-my} ~~~ \rho \geq 0. 
  \end{align*}
  Similarly for $\frac{\rho+2k}{2}\leq \beta $ we get
  \begin{align*}
  L_{2}(y)&= \int_{0}^{\infty} i^{-2k}\left(\frac{2\pi}{\lambda} \right)^{-\rho}\sum_{m=0}^{\infty}a_{m}\left(\frac{x}{m} \right)^{\frac{2k+\rho}{2}}J_{2k+\rho}\left(\frac{4\pi\sqrt{mx}}{\lambda} \right)y^{\rho+1}e^{-xy}dx,\\
  &=i^{-2k}\left(\frac{2\pi}{\lambda} \right)^{-\rho}y^{\rho+1}\sum_{m=0}^{\infty}a_{m}m^{\frac{-(2k+\rho)}{2}}\Bigg\lbrace \frac{2(4\pi\sqrt{m}/\lambda)^{2k+\rho}}{(2y)^{2k+\rho+1}}e^-{\frac{(4\pi\sqrt{m})^{2}}{4y\lambda^{2}}} \Bigg\rbrace.
  \end{align*}
  Thus with simple algebraic manipulations we obtain
  \begin{align*}
  L_{2}(y)=i^{-2k}\left(\frac{2\pi}{\lambda y} \right)^{2k}\sum_{m=0}^{\infty}a_{m}e^{-\frac{4\pi^{2}m}{y\lambda^{2}}}.
  \end{align*}
  \begin{align*}
  L_{3}(y)=\int_{0}^{\infty} e^{-xy}y^{\rho+1}\Bigg\lbrace i^{2k}\left(\frac{2\pi}{\lambda y} \right)^{2k}\frac{a_{0}}{\Gamma(2k+\rho+1)}x^{2k+\rho}-\frac{a_{0}x^{\rho}}{\Gamma(\rho+1)} \Bigg\rbrace dx. 
  \end{align*}
  Using integration  by substitution and the standard integral representation of $\Gamma(s)$~  we get
  \begin{align*}
  L_{3}(y)=a_{0}i^{2k}\left(\frac{2\pi}{\lambda y} \right)^{2k}-a_{0}.
  \end{align*}
  \begin{align*}
  L_{4}(y)=\int_{0}^{\infty}\sum_{m=k}^{L}C_{m}y^{\rho+1}e^{-xy}\Bigg\lbrace \frac{(-i)^{m}\left(2\pi/\lambda \right)^{m}x^{m+\rho}}{\Gamma(m+\rho+1)} -\frac{(i)^{2k-m}\left(2\pi/\lambda \right)^{2k-m}x^{2k-m+\rho}}{\Gamma(2k-m+\rho+1)}   \Bigg\rbrace dx.
  \end{align*}
 After evaluating and Simplifying we obtain  
  \begin{align*}
  L_{4}(y)=\sum_{m=k}^{L}C_{m}\Bigg\lbrace \left( \frac{2\pi}{i \lambda y}\right)^{m}-(-1)^{m}\left( \frac{2\pi}{i\lambda y}\right)^{2k-m}\Bigg\rbrace.
  \end{align*}
  \begin{align*}
  L_{5}(y)=\int_{0}^{\infty} {_{1}F_{1}}\left(r,\rho+r+1;\frac{-2\pi i\alpha_{j}x}{\lambda} \right)\frac{x^{\rho+r}y^{\rho+1}e^{-xy}}{\Gamma(r+\rho+1)}dx.
  \end{align*}
 Using the series representation of the hypergeometric function for $\lambda y>2\pi\alpha_{j},$  we obtain
  $$L_{5}(y)=\sum_{m=0}^{\infty}\frac{\Gamma(m+r)}{\Gamma(r)}\frac{(-1)^{m}}{m!}\left( \frac{i\alpha_{j}2\pi}{\lambda y}\right)^{m}y^{-r}.$$ This series converges absolutely for $\lambda y> 2\pi\alpha_{j}.$
 In a Similar way, we compute $L_{6}$ as follows 
  $$L_{6}(y)=\int_{0}^{\infty} {_{1}F_{1}}\left(r,\rho+2k+1;\frac{-2\pi x}{i\alpha_{j}\lambda} \right)\frac{x^{\rho+2k}y^{\rho+1}e^{-xy}}{\Gamma(2k+\rho+1)}dx = \sum_{m=0}^{\infty}\frac{\Gamma(m+r)}{\Gamma(r)}\frac{(-1)^{m}}{m!}\left( \frac{2\pi}{i\lambda y\alpha_{j}}\right)^{m}y^{-2k}. $$
  This series converges absolutely for $y>\frac{2\pi}{\lambda\alpha_{j}}.$  Combining the results of the integrals for $L_{1}(y),L_{2}(y),\cdots,L_{6}(y)$ respectively we have 
  \begin{align*}L_{1}(y)&=L_{2}(y)+L_{3}(y)-L_{4}(y)-\sum_{j=1}^{p}\sum_{r=1}^{M_{j}}C_{rj}\left( \frac{-1}{\alpha_{j}}\right)^{r}(i\alpha_{j})^{r}\left(\frac{2\pi}{\lambda} \right)^{r}L_{5}(y)\\
  &+\sum_{j=1}^{p}\sum_{r=1}^{M_{j}}C_{rj}\left( \frac{-1}{\alpha_{j}}\right)^{r}(i)^{-2k}\left(\frac{2\pi}{\lambda} \right)^{2k} L_{6}(y).
  \end{align*}
  Thus after simple algebraic manipulations, we see that the identity in $(\ref{Iden2})$ implies
  \begin{align}\label{eqn22}
  \sum_{m=0}^{\infty}a_{m}e^{-my}&=\left(\frac{2\pi}{iy\lambda} \right)^{2k} \sum_{m=0}^{\infty}a_{m}e^{\frac{-4\pi^{2}m}{y\lambda^{2}}}+a_{0}i^{2k}\left(\frac{2\pi}{\lambda y} \right)^{2k}-a_{0},\nonumber \\
  &-\sum_{m=k}^{L}C_{m}\Bigg\lbrace \left( \frac{2\pi}{i \lambda y}\right)^{m}-(-1)^{m}\left( \frac{2\pi}{i\lambda y}\right)^{2k-m}\Bigg\rbrace,\nonumber \\
  & -\sum_{j=1}^{p}\sum_{r=1}^{M_{j}}C_{rj}\left( \frac{-1}{\alpha_{j}}\right)^{r}\Bigg\lbrace \alpha_{j}^{r}\left(\frac{2\pi i}{y\lambda} \right)^{r}\sum_{m=0}^{\infty}\frac{\Gamma(m+r)}{\Gamma(r)}\frac{(-1)^{m}}{m!}\left(\frac{2\pi i\alpha_{j}}{\lambda y} \right)^{m}\nonumber \\
  & -\left(\frac{2\pi }{iy\lambda} \right)^{2k}\sum_{m=0}^{\infty}\frac{\Gamma(m+r)}{\Gamma(r)}\frac{(-1)^{m}}{m!}\left(\frac{2\pi}{i\alpha_{j}\lambda y} \right)^{m} \Bigg\rbrace,
  \end{align} 
  provided that $\rho+2k+\frac{1}{2}\geq 2\beta$ and $\displaystyle y>\max_{1\leq j \leq p}\Bigg\lbrace \frac{2\pi\alpha_{j}}{\lambda},\frac{2\pi}{\alpha_{j}\lambda}\Bigg\rbrace .$\\
  Recall that the rational periodic  function in Lemma $\ref{LEMA1}.$ 
  $$q(z)=\sum_{k\leq r\leq L}C_{r}f_{r}(z,0)+\sum_{j=1}^{p}\sum_{r=1}^{M_{j}}C_{rj}f_{r}(z,\alpha_{j}),$$
  where $f_{r}(z,0)=z^{-r}-(-1)^{r}z^{-2k+r}$ and $f_{r}(z,\alpha_{j})=(z-\alpha_{j})^{-r}-(-1)^{r} \alpha_{j}^{-r} z^{-2k+r}\left(z+\frac{1}{\alpha_{j}} \right)^{-r}. $ Letting $z=\frac{iy\lambda}{2\pi}$ and applying the binomial expansion to $f_{r}(\frac{iy\lambda}{2\pi},\alpha_{j})$ after simplifying some steps we obtain
  \begin{align}\label{Rpf1}
  q\left(\frac{iy\lambda}{2\pi} \right)=&\sum_{r=1}^{L}C_{r}\Bigg\lbrace \left( \frac{2\pi}{iy\lambda}\right)^{r}-(-1)^{r}\left( \frac{2\pi}{iy\lambda}\right)^{2k-r}\Bigg\rbrace \nonumber\\
  &+\sum_{j=1}^{p}\sum_{r=1}^{M_{j}}C_{rj}\left( \frac{-1}{\alpha_{j}}\right)^{r}\Bigg\lbrace \alpha_{j}^{r}\left(\frac{2\pi i}{y\lambda} \right)^{r}\sum_{m=0}^{\infty}\frac{\Gamma(m+r)}{\Gamma(r)}\frac{(-1)^{m}}{m!}\left(\frac{2\pi i\alpha_{j}}{\lambda y} \right)^{m}\nonumber \\
  & -\left(\frac{2\pi }{iy\lambda} \right)^{2k}\sum_{m=0}^{\infty}\frac{\Gamma(m+r)}{\Gamma(r)}\frac{(-1)^{m}}{m!}\left(\frac{2\pi}{i\alpha_{j}\lambda y} \right)^{m} \Bigg\rbrace.
  \end{align}
  Since $F(z)$ has a Fourier expansion of the form $\displaystyle \sum_{m=0}^{\infty}a_{m}e^{2\pi imz/\lambda}$  and $q(\frac{iy\lambda}{2\pi})$ is  represented  $(\ref{Rpf1}), (\ref{eqn22})$ may be written as
  $$ F\left( \frac{iy\lambda}{2\pi} \right)=\left( \frac{iy\lambda}{2\pi} \right)^{-2k}F\left( \frac{-1}{\frac{iy\lambda}{2\pi}} \right)- q\left( \frac{iy\lambda}{2\pi} \right).$$ Hence by the identity theorem  the automorphic transformation
  $$z^{-2k}F\left(\frac{-1}{z} \right)=F(z)+q(z),$$ 
  follows for $z\in\mathcal{H}.$ This concludes the proof of the equivalence of the functional equation to the identity $(\ref{Iden2}).$
   
  \end{proof}

 \maketitle
\def\theequation{\thesection.\arabic{equation}} 
 
\setcounter{equation}{0}

 \begin{thm}\label{thm31}$\left(Second~ Equivalence\right)$ 
 	Let $\Phi(s)$ and $R(s)$ as in Theorem $\ref{ThM1}$ then the functional equation 
 	\begin{equation}\label{func2}
 	\Phi(2k-s)-i^{2k}\Phi(s)=R(s)
 	\end{equation}
 	is equivalent to the identity
 	\begin{align}\label{iden2}
 	&\left( \frac{-1}{y} \frac{d}{dy}\right)^{\rho}\left(\frac{1}{y}\sum_{m=1}^{\infty}a_{m}e^{-y\sqrt{m}} \right) 
 	=-\frac{2^{\rho}}{\sqrt{\pi}y^{2\rho+1}}a_{0}\Gamma\left( \rho+\frac{1}{2}\right)\nonumber \\
 	&+\frac{i^{-2k}}{\sqrt{\pi}}2^{4k+2\rho+1}\Gamma\left(2k+\rho+\frac{1}{2}\right)\left( \frac{2\pi}{\lambda}\right)^{2k}\sum_{m=0}^{\infty}\frac{a_{m}}{\left(y^{2}+4\left(\frac{2\pi}{\lambda}\right)^{2}m \right)^{2k+\rho+\frac{1}{2}}}\nonumber \\
 	&-\frac{2^{\rho}}{\sqrt{\pi}y^{2\rho+1}}\sum_{j=1}^{p}\sum_{r=1}^{M_{j}}C_{rj}\left( \frac{-1}{\alpha_{j}}\right)^{r}\Gamma\left(r+\rho+\frac{1}{2}\right)\Psi\left(r,-\rho+\frac{1}{2};\frac{\lambda y^{2}}{8\pi i\alpha_{j}} \right)\nonumber \\
 	&+\frac{2^{\rho}}{\sqrt{\pi}y^{2\rho+1}}\sum_{j=1}^{p}\sum_{r=1}^{M_{j}}C_{rj}\left( \frac{-1}{\alpha_{j}}\right)^{r}\left(\frac{i\alpha_{j}\lambda y^{2}}{8\pi } \right)^{r-2k}\alpha_{j}^{2k} \Gamma\left(2k+\rho+\frac{1}{2}\right) \nonumber \\
 	&\times \Psi\left(r,r-2k-\rho+\frac{1}{2};\frac{i\alpha_{j}\lambda y^{2}}{8\pi } \right)\nonumber\\
 	&-\frac{2^{\rho}}{\sqrt{\pi}y^{2\rho+1}}\sum_{m=k}^{L}\Bigg\lbrace \left(\frac{8\pi i}{\lambda y^{2}} \right)^{2k-m}\Gamma\left( 2k-m+\rho+1\right)-\left(\frac{-8\pi i}{\lambda y^{2}} \right)^{m}\Gamma\left( m+\rho+\frac{1}{2}\right)\Bigg\rbrace.
 	\end{align}
 	Provided that $Re(y)>0,~ \rho\in\mathbb{Z},~\rho\geq 0$ ~and~ $\rho+2k\geq\beta+\frac{1}{2},$~ where ~ $\sum_{m=1}^{\infty}\frac{|a_{m}|}{m^{\beta}}<\infty.$
 \end{thm}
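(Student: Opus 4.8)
The plan is to follow the same Mellin-transform template used to prove Theorem \ref{thmm2} (and, before that, by Chandrasekharan--Narasimhan and Heath for their ``second'' identity), but with the Perron kernel replaced by the kernel naturally attached to the exponential generating series $\sum a_{m}e^{-y\sqrt m}$. The decisive first step is to rewrite the left side of (\ref{iden2}) as a single Mellin--Barnes integral. From $\int_{0}^{\infty}\frac{1}{y}e^{-y\sqrt m}\,y^{w-1}\,dy=\Gamma(w-1)\,m^{-(w-1)/2}$ and the shift $w\mapsto w+1$ one gets $\frac{1}{y}\sum_{m\geq1}a_{m}e^{-y\sqrt m}=\frac{1}{2\pi i}\int_{(c)}\Gamma(w)\varphi(w/2)\,y^{-w-1}\,dw$ for $c$ large. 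Applying $\left(-\frac{1}{y}\frac{d}{dy}\right)^{\rho}$ term by term inside the integral, using $\left(-\frac{1}{y}\frac{d}{dy}\right)^{\rho}y^{-w-1}=2^{\rho}\,\frac{\Gamma\left(\frac{w+1}{2}+\rho\right)}{\Gamma\left(\frac{w+1}{2}\right)}\,y^{-w-1-2\rho}$, then substituting $w=2s$, collapsing the half-integer gamma factors with Legendre's duplication formula $\Gamma(2s)=\pi^{-1/2}2^{2s-1}\Gamma(s)\Gamma\left(s+\frac12\right)$, and inserting $\Gamma(s)\varphi(s)=\left(\frac{2\pi}{\lambda}\right)^{s}\Phi(s)$, I obtain
\begin{equation*}
\left(-\frac{1}{y}\frac{d}{dy}\right)^{\rho}\left[\frac{1}{y}\sum_{m=1}^{\infty}a_{m}e^{-y\sqrt m}\right]=\frac{2^{\rho}}{\sqrt{\pi}\,y^{2\rho+1}}\cdot\frac{1}{2\pi i}\int_{(\delta)}\Phi(s)\,\Gamma\left(s+\rho+\tfrac12\right)\left(\frac{8\pi}{\lambda y^{2}}\right)^{s}ds ,
\end{equation*}
where $\delta$ is taken beyond the abscissa of convergence of $\varphi$ and large enough for the pole list of Corollary \ref{Cr1} to apply; the interchange of sum and integral and the convergence on vertical lines rest on $\mathrm{Re}(y)>0$, which places $8\pi/(\lambda y^{2})$ in a sector in which the exponential decay of $\Gamma(s+\rho+\frac12)$ dominates, and on the hypothesis $\rho+2k\geq\beta+\frac12$.

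For the implication ``(\ref{func2}) $\Rightarrow$ (\ref{iden2})'' I would move the contour from $\mathrm{Re}(s)=\delta$ to $\mathrm{Re}(s)=2k-\delta$. By Corollary \ref{Cr1}(a) and Stirling's formula (applied as in \cite{BC}) the horizontal sides of the rectangle drop out, so the shift produces $2\pi i$ times the sum of the residues of $\Phi(s)\,\Gamma(s+\rho+\frac12)\left(8\pi/\lambda y^{2}\right)^{s}$ at the poles of $\Phi$ inside the strip, where $\Gamma(s+\rho+\frac12)$ is holomorphic because $\rho+2k\geq\beta+\frac12$; evaluating these with (\ref{EQ0})--(\ref{EQ3}) supplies the $a_{0}$-term, the sum over $k\leq m\leq L$, and part of the remaining finite terms of (\ref{iden2}). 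On the line $\mathrm{Re}(s)=2k-\delta$ I then substitute $\Phi(s)=i^{-2k}\bigl(\Phi(2k-s)-R(s)\bigr)$. In the $\Phi(2k-s)$-part, the change of variable $s\mapsto 2k-s$, expansion of $\varphi$ into its Dirichlet series, and the Beta-function Mellin--Barnes identity $\frac{1}{2\pi i}\int_{(c)}\Gamma(s)\Gamma(a-s)\,t^{-s}\,ds=\Gamma(a)(1+t)^{-a}$ with $a=2k+\rho+\frac12$ and $t=16\pi^{2}m/(\lambda^{2}y^{2})$ produce the main term, a series $\sum_{m}a_{m}\bigl(y^{2}+4(2\pi/\lambda)^{2}m\bigr)^{-(2k+\rho+1/2)}$. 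In the $R(s)$-part I insert the explicit form of $R(s)$ as a combination of $B(s,r-s)$ and $B(2k-s,r-(2k-s))$; the resulting integrals have the shape $\frac{1}{2\pi i}\int\Gamma(s)\Gamma(s+\rho+\frac12)\Gamma(r-s)\,t^{-s}\,ds=\Gamma(r)\Gamma(r+\rho+\frac12)\,\Psi\left(r,\tfrac12-\rho;t\right)$, which yields the Tricomi-$\Psi$ terms, together with the finite ``polynomial remainders'' that cancel the $\delta$-dependent parts of the $E^{H}$- and $E^{B}$-residues. Assembling all these contributions gives (\ref{iden2}).

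For the converse I would argue as in the second half of the proof of Theorem \ref{thmm2}. Identity (\ref{iden2}) equates two explicit functions of $y$, each of which is a Laplace/Mellin transform, so by uniqueness of such transforms -- equivalently, by inverting on the pertinent function class the injective Mellin multiplier $2^{\rho}\Gamma(\frac{w+1}{2}+\rho)/\Gamma(\frac{w+1}{2})$ that $\left(-\frac1y\frac{d}{dy}\right)^{\rho}\left[\frac1y\,\cdot\,\right]$ induces -- one recovers, after the substitution $z=iy\lambda/2\pi$, the relation $z^{-2k}F(-1/z)=F(z)+q(z)$ with $q$ the rational period function of Lemma \ref{LEMA2}; since $F$ is holomorphic on $\mathcal H$, the identity theorem promotes this to all $z\in\mathcal H$, which is equivalent to (\ref{func2}). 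Alternatively one may verify directly that (\ref{iden2}) and the Bessel-type identity (\ref{thmmI}) of Theorem \ref{thmm2} are equivalent and invoke that theorem.

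The step I expect to be the main obstacle is the bookkeeping in the forward direction: matching the residues (\ref{EQ0})--(\ref{EQ3}) and the finite polynomial remainders coming out of the Beta-function Mellin--Barnes evaluations against the explicit terms of (\ref{iden2}), while tracking precisely the normalizing constants (the powers of $2$, $\pi$, $i$, $\lambda$) and the correct identification of the several confluent-hypergeometric pieces (${}_{1}F_{1}$ and $\Psi$) through their Mellin--Barnes representations. A secondary technical difficulty is the analytic justification: the $\Psi$-series converge only in a restricted region of the $y$-plane (conditions of the type $\lambda|y|^{2}>c$ or $|\arg y|$ small), so the identity is first established there and then continued to $\mathrm{Re}(y)>0$, and every contour shift must be underpinned by Stirling's estimates together with the lacunary-strip boundedness in Corollary \ref{Cr1}(a).
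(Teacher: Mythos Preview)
Your proposal is correct and matches the paper's proof in structure: derive the Mellin--Barnes representation $\frac{2^{\rho}}{\sqrt{\pi}\,y^{2\rho+1}}\cdot\frac{1}{2\pi i}\int_{(\delta)}\Phi(s)\,\Gamma\bigl(s+\rho+\tfrac12\bigr)\bigl(8\pi/\lambda y^{2}\bigr)^{s}\,ds$ for the left side, shift to $\mathrm{Re}\,s=2k-\delta$ collecting residues via Corollary~\ref{Cr1}, insert the functional equation on the new line, and evaluate the $\Phi(2k-s)$-piece by the Beta-integral $(1+t)^{-a}$ formula and the $R(s)$-piece by Barnes integrals for $\Psi$. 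The only cosmetic differences are that the paper reaches this integral representation by first writing Perron's formula and integrating against $e^{-y\sqrt{x}}x^{-1/2}\,dx$ (using the Chandrasekharan--Narasimhan identity (\ref{idencn})) rather than Mellin-transforming the exponential sum directly, and for the converse it takes exactly your ``alternative'' route---integrating (\ref{iden2}) against $e^{y\sqrt{x}}$ along a vertical $y$-line to recover (\ref{thmmI}) term by term and then invoking Theorem~\ref{thmm2}.
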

 \begin{proof}
 By theorem $\ref{thmm2}$ the identity  $(\ref{thmmI})$ is equivalent to the functional equation $(\ref{func2}).$ Hence to prove this theorem it suffices to show that $(\ref{func2})$ implies $(\ref{iden2})$ and $(\ref{iden2})$ in turn implies $(\ref{func2}).$ 
 
 Now we first show $(\ref{func2})$ implies $(\ref{iden2}).$ To this end let $\varphi(s)=\displaystyle\sum_{n=1}^{\infty}\frac{a_n}{n^{s}}$~ with~ $\displaystyle \sum_{n=1}^{\infty}\frac{\vert a_n\vert}{n^{\beta}}<\infty$. Then by Lemma$\ref{LemPe},$ for $\rho\geq0,   \delta>0,\delta\geq\beta$ we have \begin{eqnarray}\label{PER}
 \frac{1}{\Gamma(\rho+1)}{\sum_{0\leq m \leq x}}^\prime a_{m}(x-m)^{\rho}=\frac{1}{2\pi i}\int_{\delta-i\infty}^{\delta+i\infty}\frac{\Gamma(s)\varphi(s)x^{s+\rho}}{\Gamma(s+\rho+1)}ds.
 \end{eqnarray}
 
 As in Chandrasekharan and Narasimhan (\cite{CN} page $9),$ we multiply~ $(\ref{PER})$ by~ $e^{-y\sqrt{x}}x^{-\frac{1}{2}}$  and integrating with respect to the variable $x$ from $x=0$ to $\infty;$ and further assuming $\delta>2k, $ where Re$s=\delta$ is the vertical path of integration. Now choose $\delta=\beta+P,$ where 
 $ P\in\mathbb{Z},$ and $P$ is large enough to guarantee $\delta>2k$ and $\delta\notin\mathbb{Z}$ to have
 \begin{align}
 \int_{0}^{\infty}e^{-y\sqrt{x}}x^{-\frac{1}{2}}\Bigg\lbrace\frac{1}{\Gamma(\rho+1)}\sum_{0\leq m\leq x}a_{m}(x-m)^{\rho}\Bigg\rbrace dx=\int_{0}^{\infty}e^{-y\sqrt{x}}x^{-\frac{1}{2}}\Bigg\lbrace\frac{1}{2\pi i} \int_{(\delta)}\frac{\varphi(s)\Gamma(s)x^{s+\rho}}{\Gamma(s+\rho+1)}ds\Bigg\rbrace dx,
 \end{align}
 with $\rho +2k \geq\delta+\frac{1}{2},$ and $y\in\mathbb{R^{+}}.$ 
 Chandrasekharan and Narasimhanin \cite{CN} for $\rho+2k+\frac{1}{2}\geq\beta$ and $\lambda_{n}$ sequence of positive real numbers; $\lambda_{n}\rightarrow\infty,$   showed the identity

 	\begin{align}\label{idencn}
 \sum_{n=1}^{\infty} a_{n}\int_{\lambda_{n}}^{\infty}\frac{(x-\lambda_{n})^{\rho}}{\Gamma(\rho+1)}e^{-y\sqrt{x}}x^{-\frac{1}{2}}dx=2(-2)^{\rho}\left( \frac{1}{y}\frac{d}{dy}\right)^{\rho}\Bigg[\frac{1}{y}\sum_{n=1}^{\infty}a_{n}e^{-y\sqrt{\lambda_{n}}}\Bigg].
 \end{align}
 Then put $\lambda_{n}=m$ in $(\ref{idencn})$ to have 
 \begin{align*}
 \sum_{m=1}^{\infty} a_{m}\int_{m}^{\infty}\frac{(x-m)^{\rho}}{\Gamma(\rho+1)}e^{-y\sqrt{x}}x^{-\frac{1}{2}}dx=2(-2)^{\rho}\left( \frac{1}{y}\frac{d}{dy}\right)^{\rho}\Bigg[\frac{1}{y}\sum_{m=1}^{\infty}a_{m}e^{-y\sqrt{m}}\Bigg].
 \end{align*}
 Since $\Phi(s)=\Gamma(s)\varphi(s)\left(\frac{2\pi}{\lambda}\right)^{-s},$ where Re$s>\beta$ and $\sum_{m=1}^{\infty}\frac{|a_{m}|}{m^{\beta}}<\infty,$  the right-hand side of $(\ref{PER})$ becomes
 \begin{align*}
 U(y)=\int_{0}^{\infty}e^{-y\sqrt{x}}x^{\frac{-1}{2}}\Bigg\lbrace \frac{1}{2\pi i}\int_{(\delta)}\left(\frac{2\pi}{\lambda} \right)^{s} \frac{\Phi(s) x^{s+\rho}}{\Gamma(s+\rho+1)}ds \Bigg\rbrace dx,
 \end{align*}
 for $\delta\geq\beta.$ Interchanging the order of integration  for $\rho\geq 0$ we obtain
 \begin{align*}
 U(y)&=\frac{1}{2\pi i}\int_{(\delta)}\left(\frac{2\pi}{\lambda} \right)^{s}\frac{\Phi(s)}{\Gamma(s+\rho+1)} ds \int_{0}^{\infty}e^{-y\sqrt{x}}x^{s+\rho-\frac{1}{2}}dx,\\
 &=\frac{2}{2\pi i}\int_{\delta-i\infty}^{\delta+i\infty}\left(\frac{2\pi}{\lambda} \right)^{s}\Phi(s)\frac{\Gamma(2s+2\rho+1)}{\Gamma(s+\rho+1)}\frac{1}{y^{2s+2\rho+1}}ds.
 \end{align*}
 Using the properties of $\Gamma$ function we have
 \begin{align*}
 U(y)=\frac{1}{2\pi i}\int_{(\delta)}\left(\frac{2\pi}{\lambda} \right)^{s}\Phi(s)\frac{\Gamma(s+\rho+\frac{1}{2})}{\sqrt{\pi}}\frac{2^{2s+2\rho+1}}{y^{2s+2\rho+1}}ds.
 \end{align*}
 Using \cite{BC} we evaluate $U(y)$ and have 
 \begin{align}\label{T2}
 U(y)&=\frac{1}{2\pi i}\int_{(2k-\delta)}\left(\frac{2\pi}{\lambda} \right)^{s}\Phi(s)\frac{\Gamma(s+\rho+\frac{1}{2})}{\sqrt{\pi}}\frac{2^{2s+2\rho+1}}{y^{2s+2\rho+1}}ds\nonumber \\
 &+\sum_{s\in polset\Phi(s)}\text{Res}\left(\frac{2\pi}{\lambda} \right)^{s}\Phi(s)\frac{\Gamma(s+\rho+\frac{1}{2})}{\sqrt{\pi}}\frac{2^{2s+2\rho+1}}{y^{2s+2\rho+1}}.
 \end{align} 
 Denote the first and second integrals as $T_{1}(y)$ and $T_{2}(y)$respectively so that  $U(y)=T_{1}(y)+T_{2}(y).$
 Using the identity, $\Phi(s)=i^{-2k}\left(\Phi(2k-s)-R(s)\right),$ and substituting in to the integrand of $T_{1}(y),$ we can write as 
 $$T_{1}(y) =V_{1}(y)-V_{2}(y),$$ where $V_{1}(y)$ and $V_{2}(y)$ are given as 
 \begin{align*}
 V_{1}(y)=\frac{1}{2\pi i}\int_{(2k-\delta)}\left(\frac{2\pi}{\lambda} \right)^{s} i^{-2k} \Phi(2k-s)\frac{\Gamma(s+\rho+\frac{1}{2})}{\sqrt{\pi}}\frac{2^{2s+2\rho+1}}{y^{2s+2\rho+1}}ds,
 \end{align*}
 \begin{align*}
 V_{2}(y)=\frac{1}{2\pi i}\int_{(2k-\delta)}\left(\frac{2\pi}{\lambda} \right)^{s} i^{-2k}R(s)\frac{\Gamma(s+\rho+\frac{1}{2})}{\sqrt{\pi}}\frac{2^{2s+2\rho+1}}{y^{2s+2\rho+1}}ds.
 \end{align*}
 Using the substitution $\vartheta =2k-s$ and $\Phi(\vartheta)=\left( \frac{2\pi}{\lambda}\right)^{-\vartheta}\Gamma(\vartheta)\varphi(\vartheta),$ where $\varphi(\vartheta)=\sum_{m=1}^{\infty}\frac{a_{m}}{m^{\vartheta}},$ replacing $\vartheta$ by $-\vartheta,$ we have  \begin{align*}
 V_{1}(y)=\frac{i^{-2k}}{\sqrt{\pi}} 2^{4k+2\rho+1}\Gamma\left(2k+\rho+\frac{1}{2} \right)\left(\frac{2\pi}{\lambda} \right)^{2k}\sum_{m=1}^{\infty} \frac{a_{m}}{\left( y^{2}+\left( \frac{4\pi}{\lambda}\right)^{2}m\right)^{2k+\rho+\frac{1}{2}}}.
 \end{align*} 
 The series converges for $y>0, $ provided  $2k+\rho-\frac{1}{2}> \beta.$ Since $\delta\geq \beta $ the series converges absolutely for $\rho\geq\delta-2k+\frac{1}{2}.$ \\
 Replacing the beta function by its equivalent $\Gamma$ function and replacing $R(s)$ by its equivalent representation we have
  \begin{align}\label{vtwo}
 V_{2}(y)=\frac{1}{\sqrt{2\pi}} \frac{2^{2\rho+1}}{y^{2\rho+1}}\sum_{j=1}^{p}\sum_{r=1}^{M_{j}}C_{rj}\left(\frac{-1}{\alpha_{j}} \right)^{r}\bigg\lbrace Q_{1}(y)-\alpha_{j}^{2k}Q_{2}(y)\bigg\rbrace,
 \end{align}
 where $$ Q_{1}(y)=\frac{1}{2\pi i}\int_{(2k-\delta)} \left(\frac{8\pi i\alpha_{j}}{\lambda y^{2}}\right)^{s}\frac{\Gamma(s)\Gamma(r-s)}{\Gamma(r)}\Gamma\left(s+\rho+\frac{1}{2}\right)ds$$ and
 $$ Q_{2}(y)=\frac{1}{2\pi i}\int_{(2k-\delta)} \left(\frac{8\pi}{i\lambda y^{2}\alpha_{j}}\right)^{s}\frac{\Gamma(2k-s)\Gamma(r+s-2k)}{\Gamma(r)}\Gamma\left(s+\rho+\frac{1}{2}\right)ds .$$ 
Using \cite{BC} we evaluate $Q_{1}(y)$ and obtain 
 \begin{align*}
 &\frac{1}{2\pi i}\int_{(2k-\delta)} \left(\frac{8\pi i\alpha_{j}}{\lambda y^{2}}\right)^{-s}\frac{\Gamma(-s)\Gamma(r+s)}{\Gamma(r)}\Gamma\left(\rho+\frac{1}{2}-s\right)ds\\
 &=\frac{1}{2\pi i}\int_{(N+\frac{1}{4})} \left(\frac{8\pi i\alpha_{j}}{\lambda y^{2}}\right)^{-s}\frac{\Gamma(-s)\Gamma(r+s)}{\Gamma(r)}\Gamma\left(\rho+\frac{1}{2}-s\right)ds\\
 &+\sum_{s\in Pole ~set~ of~ \Phi}\text{Res}\Bigg\lbrace  \left(\frac{8\pi i\alpha_{j}}{\lambda y^{2}}\right)^{-s}\frac{\Gamma(-s)\Gamma(r+s)}{\Gamma(r)}\Gamma\left(\rho+\frac{1}{2}-s\right)\Bigg\rbrace.
 \end{align*}
 It can be easily shown that the integral on the right-hand side tends to zero as $N$ tends to $\infty.$ Thus evaluating the residue of the poles we have 
 \begin{align*}
 Q_{1}(y)&=\sum_{[\delta]-2k+1}^{\infty} \left(\frac{8\pi i\alpha_{j}}{\lambda y^{2}} \right)^{-m}\frac{(-1)^{m}}{m!}\frac{\Gamma(m+r)}{\Gamma(r)}\Gamma\left(\rho-m+\frac{1}{2}\right)\\
 &+\sum_{m=0}^{\infty} \left(\frac{8\pi i\alpha_{j}}{\lambda y^{2}} \right)^{-m-\rho-\frac{1}{2}}\frac{(-1)^{m}}{m!}\frac{\Gamma(m+\rho+r+\frac{1}{2})}{\Gamma(r)}\Gamma\left(-\rho-m-\frac{1}{2}\right).
 \end{align*}
 Again using properties of the $\Gamma$ function and simplifying we obtain
 \begin{align*}
 Q_{1}(y)&=\sum_{m=0}^{\infty} \left(\frac{8\pi i\alpha_{j}}{\lambda y^{2}} \right)^{-m}\frac{(-1)^{m}}{m!}\frac{\Gamma(m+r)}{\Gamma(r)}(-1)^{m}\frac{\Gamma\left(\rho+\frac{1}{2} \right)\Gamma\left( -\rho+\frac{1}{2}\right)}{\Gamma\left(m-\rho+\frac{1}{2} \right)}\\
 & -\sum_{m=0}^{[\delta-2k]} \left(\frac{8\pi i\alpha_{j}}{\lambda y^{2}} \right)^{-m}\frac{(-1)^{m}}{m!}\frac{\Gamma(m+r)}{\Gamma(r)}\Gamma\left( \rho-m+\frac{1}{2}\right)\\
 &+\sum_{m=0}^{\infty} \left(\frac{8\pi i\alpha_{j}}{\lambda y^{2}} \right)^{-m-\rho-\frac{1}{2}}\frac{(-1)^{m}}{m!}(-1)^{m}\frac{\Gamma\left(-\rho-\frac{1}{2} \right)\Gamma\left( \rho+\frac{3}{2}\right)}{\Gamma\left(m+\rho+\frac{3}{2} \right)}\frac{\Gamma(m+\rho+r+\frac{1}{2})}{\Gamma(r)}.
 \end{align*}
 Using series representation of the confluent hypergeometric function we get
 \begin{align*}
 Q_{1}(y)&=\Gamma\left(\rho+\frac{1}{2} \right){_{1}F_{1}}\left( r,-\rho+\frac{1}{2} ; \frac{\lambda y^{2}}{8\pi i\alpha_{j}}\right)\\
 &+\frac{\Gamma\left(\rho+r+\frac{1}{2} \right)\Gamma\left(-\rho-\frac{1}{2} \right)}{\Gamma(r)}\left(\frac{\lambda y^{2}}{8\pi i\alpha_{j}} \right)^{\rho+\frac{1}{2}}{_{1}F_{1}}\left(\rho+r+\frac{1}{2},\rho+\frac{3}{2} ; \frac{\lambda y^{2}}{8\pi i\alpha_{j}}\right)\\
 &-\sum_{m=0}^{[\delta-2k]}\left( \frac{8\pi i\alpha_{j}}{y^{2}\lambda}\right)^{-m}\frac{(-1)^{m}}{m!}\frac{\Gamma(m+r)}{\Gamma(r)}\Gamma\left( \rho-m+\frac{1}{2}\right).
 \end{align*}
Using the the confluent hypergeometric function of the second kind we have  
 \begin{align}\label{Q1}
 Q_{1}(y)=&\Gamma\left(\rho+r+\frac{1}{2} \right)\Psi\left( r,-\rho+\frac{1}{2} ; \frac{\lambda y^{2}}{8\pi i\alpha_{j}} \right) \nonumber \\
 &-\sum_{m=0}^{[\delta]-2k}\left( \frac{8\pi i\alpha_{j}}{y^{2}\lambda}\right)^{-m}\frac{(-1)^{m}}{m!}\frac{\Gamma(m+r)}{\Gamma(r)}\Gamma\left( \rho-m+\frac{1}{2}\right).
 \end{align}
 \begin{align}\label{Q2}
 Q_{2}(y)&=\Gamma\left(2k+\rho+\frac{1}{2} \right)\left( \frac{i\alpha_{j} \lambda y^{2}}{8\pi}\right)^{r-2k} \Psi\left(r,r-\rho-2k+\frac{1}{2} ; \frac{i\alpha_{j} \lambda y^{2}}{8\pi}\right) \nonumber\\ 
 &- \sum_{m=0}^{[\delta]-r}\left( \frac{8\pi}{i\lambda y^2 \alpha_{j}}\right)^{2k-m-r}\frac{(-1)^{m}}{m!}\frac{\Gamma\left(m+r \right)}{\Gamma(r)}\Gamma\left(-m-r+\rho+\frac{1}{2}\right).
 \end{align}
 Thus for $\rho\geq 0, \rho\in\mathbb{Z}, \rho+2k \geq \delta+\frac{1}{2}, \delta>2k$ and $y\in\mathbb{R^{+}}$ the functional equation in $(\ref{func2})$ implies 
 \begin{align*}
 &U(y)=\frac{2^{2\rho+1}}{\sqrt{\pi}y^{2\rho+1}}a_{o}\Gamma(\rho+\frac{1}{2})+2^{\rho+1}\left(-\frac{1}{y}\frac{d}{dy} \right)^{\rho}\Bigg[\frac{1}{y}\sum_{m=1}^{\infty} a_{m}e^{-y\sqrt{m}}\Bigg]\\
 & =T_{1}(y)+T_{2}(y) = V_{1}(y)-V_{2}(y)+T_{2}(y)\\
 &=\frac{i^{-2k}}{\sqrt{\pi}} 2^{4k+2\rho+1}\Gamma\left(2k+\rho+\frac{1}{2} \right)\left(\frac{2\pi}{\lambda} \right)^{2k}\sum_{m=1}^{\infty} \frac{a_{m}}{\left( y^{2}+\left( \frac{4\pi}{\lambda}\right)^{2}m\right)^{2k+\rho+\frac{1}{2}}}-V_{2}(y)+T_{2}(y).
 \end{align*}
 Equating the left-hand and right-hand side expression we obtain
 \begin{align}\label{T21}
 & 2^{\rho+1}\left(-\frac{1}{y}\frac{d}{dy} \right)^{\rho}\Bigg[\frac{1}{y}\sum_{m=1}^{\infty} a_{m}e^{-y\sqrt{m}}\Bigg]=\frac{i^{-2k}}{\sqrt{\pi}} 2^{4k+2\rho+1}\Gamma\left(2k+\rho+\frac{1}{2} \right)\left(\frac{2\pi}{\lambda} \right)^{2k}\sum_{m=1}^{\infty} \frac{a_{m}}{\left( y^{2}+\left( \frac{4\pi}{\lambda}\right)^{2}m\right)^{2k+\rho+\frac{1}{2}}}\nonumber\\
 &-V_{2}(y)+T_{2}(y)-\frac{2^{2\rho+1}}{\sqrt{\pi}y^{2\rho+1}}a_{o}\Gamma(\rho+\frac{1}{2}),
 \end{align}
 where $T_{2}(y)$ and $V_{2}(y)$ are given by $(\ref{T2})$ and $(\ref{vtwo})$ respectively. We can also get an explicit formula for $V_{2}(y)$ from $(\ref{T2})$ and $(\ref{Q1}).$  \\
 
 Using the expression for $T_2$ from (\ref{T2}) and the residue of $\Phi(s)$ from theorem 2.1 we have
 \begin{align}\label{T22}
 T_{2}(y)&=\frac{2^{2\rho+1}}{\sqrt{\pi} y^{2\rho+1}}\Bigg\lbrace\left( \frac{8\pi i}{\lambda y^2}\right)^{2k}\Gamma\left(2k+\rho+\frac{1}{2} \right)-a_{0}\Gamma\left(\rho+\frac{1}{2} \right)\nonumber\\
 &+\sum_{m=k}^{L}C_{m} \bigg\lbrace \left( \frac{8\pi i}{\lambda y^2}\right)^{2k-m}\Gamma\left(2k-m+\rho+\frac{1}{2} \right)-\left( -\frac{8\pi i}{\lambda y^2}\right)^{m}\Gamma\left(m+\rho+\frac{1}{2} \right)  \bigg\rbrace \nonumber \\
 &+\sum_{j=1}^{p}\sum_{r=1}^{M_{j}}C_{rj}\Bigg[ \sum_{m=0}^{[\delta-r]} \frac{\Gamma(m+r)}{\Gamma(r)}\frac{(-1)^{m}}{m!}\left( \frac{8\pi}{\lambda y^2}\right)^{2k-m-r}i^{m+r-2k} \alpha_{j}^{m}\times\nonumber\\
 &~~~~~~~~\Gamma\left(2k-m-r+\rho+\frac{1}{2} \right) \nonumber\\ 
 &-\sum_{m=0}^{[\delta-2k]} \frac{\Gamma(m+r)}{\Gamma(r)}\frac{(-1)^{m}}{m!}\left( \frac{\lambda y^2 }{8\pi}\right)^{m}i^{m}\Gamma\left(-m+\rho+\frac{1}{2} \right)\alpha_{j}^{-m-r}\Bigg] \Bigg\rbrace.
 \end{align}
 Now substituting(\ref{T22}) into (\ref{T21})  for $T_{2}(y)$ and by substituting  the respective expressions $(\ref{Q1}),   (\ref{Q2})$  into $V_{2}(y),$  we obtain the identity (\ref{iden2}) and this completes the proof of implication part.\\
 
 To prove the converse we only show $(\ref{iden2}) $ implies   $(\ref{func2}).$ 
  Multiply (\ref{thmmI}) by $e^{y\sqrt{x}},$ with Re$y>0$ and $x>0$ and integrate the expression along vertical path Re$s= \vartheta,$ where $\vartheta>0.$  The left hand side (\ref{iden2}) can be evaluated using the formula (\cite{CN}, page 9) 
 \begin{align}\label{conver2}
 \sum_{m=1}^{\infty}a_{m}\frac{1}{2\pi i} \int_{(\vartheta)}e^{y\sqrt{x}}\left(-\frac{1}{y}\frac{d}{dy} \right)^{\rho}\bigg[\frac{1}{y}e^{-y\sqrt{m}}\bigg]dy=\frac{1}{\Gamma(\rho+1)}{\sum_{m\leq x}}^{\prime} a_{m}(x-m)^{\rho}2^{-\rho},
 \end{align} 
 For the right-hand side of (\ref{iden2}) we compute the integral of each term one by one. So put the first term
 \begin{align*}
 P(x)=\frac{i^{-2k}}{\sqrt{\pi}}2^{4k+\rho} \Gamma\left( 2k+\rho+\frac{1}{2}\right)\left(\frac{2\pi}{\lambda} \right)^{2k}\sum_{m=1}^{\infty}a_{m}\frac{1}{2\pi i}\int_{(\vartheta)} \frac{e^{y\sqrt{x}}}{[y^2+\left( \frac{4\pi}{\lambda}\right)^{2}m ]^{2k+\rho+\frac{1}{2}}}dy.
 \end{align*}
 Using Sterling formula and simplifying we obtain 
 
  \begin{align}\label{exp1}
 P(x)=i^{-2k}\left(\frac{4\pi}{\lambda} \right)^{-\rho} \sum_{m=1}^{\infty} a_{m}\left(\frac{x}{m} \right)^{\frac{2k+\rho}{2}}J_{2k+\rho}\left(\frac{4\pi\sqrt{mx}}{\lambda} \right).
 \end{align}
 Put the second term
 \begin{align*}
 I_{1}(x)=\frac{1}{2\pi i}\int_{(\vartheta)}\frac{e^{y\sqrt{x}}}{y^{2\rho+1}}\Psi\left(r,-\rho+\frac{1}{2};\frac{\lambda y^{2}}{8\pi i\alpha_{j}} \right)dy.
 \end{align*} 
 Using Sterling formula we have 
 \begin{align*}
 I_{1}(x)&=\frac{1}{2\pi i}\int_{(\vartheta)}\frac{e^{y\sqrt{x}}}{y^{2\rho+1}} \bigg\lbrace\frac{1}{2\pi i}\int_{(\theta)}\frac{\Gamma(r+\tau)\Gamma(-\tau)\Gamma(\rho+\frac{1}{2}-\tau)}{\Gamma(r)\Gamma(r+\rho+\frac{1}{2})}\left(\frac{\lambda y^{2}}{8\pi i\alpha_{j}} \right)^{\tau}d \tau\bigg\rbrace dy\\
 &=\frac{1}{2\pi i}\int_{(\theta)} \Bigg\lbrace\frac{\Gamma(r+\tau)\Gamma(-\tau)\Gamma(\rho+\frac{1}{2}-\tau)}{\Gamma(r)\Gamma(r+\rho+\frac{1}{2})}\left(\frac{\lambda }{8\pi i\alpha_{j}} \right)^{\tau}\frac{1}{2\pi i}\int_{(\vartheta)}e^{y\sqrt{x}}y^{2\tau-2\rho-1}dy\Bigg\rbrace d \tau\\
 &=\frac{1}{2\pi i}\int_{(\theta)} \frac{\Gamma(r+\tau)\Gamma(-\tau)\Gamma(\rho+\frac{1}{2}-\tau)}{\Gamma(r)\Gamma(r+\rho+\frac{1}{2})}\left(\frac{\lambda }{8\pi i\alpha_{j}} \right)^{\tau}\frac{x^{\rho-\tau}}{\Gamma(2\rho-2\tau+1)}d\tau.
 \end{align*}
Using properties of the $\Gamma$ function we obtain
 \begin{align*}
 I_{1}(x)=\frac{\sqrt{\pi}x^{\rho}}{2^{2\rho}\Gamma(r+\frac{1}{2}+\rho)\Gamma(r)}\Bigg[\frac{1}{2\pi i}\int_{(\theta)}\frac{\Gamma(r+\tau)\Gamma(-\tau)}{\Gamma(\rho+1-\tau)} \left(\frac{\lambda }{2\pi i\alpha_{j}} \right)^{\tau}d\tau\Bigg].
 \end{align*} 
Using \cite{BC} we obtain 
 \begin{align*}
 I_{1}(x)=\frac{\sqrt{\pi}x^{\rho}}{2^{2\rho}\Gamma(r+\frac{1}{2}+\rho)\Gamma(r)}\sum_{m=0}^{\infty}\frac{(-1)^{m}}{m!} \frac{\Gamma(m+r)}{\Gamma(\rho+m+r+1)}\left(\frac{2\pi i\alpha_{j}x}{\lambda} \right)^{m+r}.
 \end{align*}
 Therefore, we conclude that
 \begin{align}\label{exp2}
 \frac{1}{2\pi i}\int_{(\vartheta)}\frac{e^{y\sqrt{x}}}{\sqrt{\pi}y^{2\rho+1}}2^{\rho}\sum_{j=1}^{p}\sum_{r=1}^{M_{j}}C_{ri}\left( \frac{-1}{\alpha_{j}}\right)^{r}\Gamma(\rho+r+\frac{1}{2})\Psi\left(r,-\rho+\frac{1}{2};\frac{\lambda y^{2}}{8\pi i\alpha_{j}} \right)dy \nonumber  \\
 =\frac{1}{2^{\rho}}\sum_{j=1}^{p}\sum_{r=1}^{M_{j}}C_{rj}\left( \frac{-1}{\alpha_{j}}\right)^{r}\left(\frac{2\pi i\alpha_{j}}{\lambda} \right)^{r}\frac{x^{\rho+r}}{\Gamma(\rho+r+1)}{_{1}F_{1}}\left(r,r+\rho+1;\frac{-2\pi i\alpha_{j}x}{\lambda} \right).
 \end{align}
  Next put 
 \begin{align*}
 I_{2}(x)=\frac{1}{2\pi i}\int_{(\vartheta)}\frac{e^{y\sqrt{x}}}{y^{2\rho+1}}\left(\frac{\alpha_{j}i\lambda y^{2}}{8\pi} \right)^{r-2k} \Psi\left(r,r-2k-\rho+\frac{1}{2};\frac{i\alpha_{j}\lambda ^{2}}{8\pi} \right) dy. 
 \end{align*}
 For fixed $r, -r<\theta<-r+\frac{1}{2}$ and $-\frac{\pi}{2}\leq arg\left(\frac{i\alpha_{j}\lambda ^{2}}{8\pi} \right),$  using the integral representation of confluent hypergeometric function of the second kind  we get 
 \begin{align*}
 I_{2}(x)=\frac{1}{2\pi i}\int_{(\vartheta)}\frac{e^{y\sqrt{x}}}{y^{2\rho+1}}\left(\frac{\alpha_{j}i\lambda y^{2}}{8\pi} \right)^{r-2k}\Bigg\lbrace \frac{1}{2\pi i}\int_{(\theta)} \frac{\Gamma(r+\tau)\Gamma(-\tau)\Gamma\left(2k+\rho-\tau-r+\frac{1}{2} \right)}{\Gamma(r)\Gamma(2k+\frac{1}{2}+\rho)} \\
 \times\left(\frac{\alpha_{j}i\lambda y^{2}}{8\pi} \right)^{\tau}d\tau \Bigg\rbrace dy.
 \end{align*}
 By interchanging the order of integration which can be justified we obtain  
 \begin{align*}
 I_{2}(x)=\frac{1}{2\pi i}\int_{(\theta)}\Bigg[ \frac{\Gamma(r+\tau)\Gamma(-\tau)\Gamma\left(2k+\rho-\tau-r+\frac{1}{2} \right)}{\Gamma(r)\Gamma(2k+\frac{1}{2}+\rho)}\left(\frac{\alpha_{j}i\lambda}{8\pi} \right)^{\tau+r-2k}\\
 \times \frac{1}{2\pi i}\int_{(\vartheta)}e^{y\sqrt{x}} y^{2\tau+2r-4k-2\rho-1}dy\Bigg]  d\tau. 
 \end{align*}
Computing the last integral and simplifying we obtain 
 \begin{align*}
 I_{2}(x)=\frac{1}{2\pi i}\int_{(\theta)} \Bigg[\frac{\Gamma(r+\tau)\Gamma(-\tau)\Gamma\left(2k+\rho-\tau-r+\frac{1}{2} \right)}{\Gamma(r)\Gamma(2k+\frac{1}{2}+\rho)}\left(\frac{\alpha_{j}i\lambda}{8\pi} \right)^{\tau+r-2k}\\
 \times \frac{x^{2k+\rho-r-\tau}}{\Gamma(4k+2\rho+1-2r-2\tau)}\Bigg]d\tau.
 \end{align*}
 Using properties of the $\Gamma$ function we obtain
$$\displaystyle I_{2}(x) =\frac{\sqrt{\pi}}{\Gamma(r)\Gamma\left( 2k+\rho+\frac{1}{2}\right)}\left( \frac{i \alpha_{j} \lambda}{8\pi}\right)^{r-2k} \left(\frac{x}{4}\right)^{2k+\rho-r}\times\\ 
 \frac{1}{2\pi i} \int_{(\theta)}\frac{\Gamma(r+\tau)\Gamma(-\tau)}{\Gamma(2k+\rho+1-r-\tau)}\left( \frac{i\alpha_{j}\lambda}{2\pi x}\right)^{\tau}d\tau.$$\\
Using \cite{BC}) we obtain
 $$I_{2}(x)=\frac{\sqrt{\pi}}{\Gamma(r)\Gamma\left( 2k+\rho+\frac{1}{2}\right)}\left( \frac{i \alpha_{j} \lambda}{8\pi}\right)^{-2k} \frac{x^{2k+\rho}}{2^{4k+2\rho-2r}}  \sum_{m=0}^{\infty}\frac{(-1)^{m}}{m!}\frac{\Gamma(m+r)}{\Gamma(2k+\rho+m+1)}\left(\frac{2\pi x}{i\alpha_{j}\lambda} \right)^{m}.$$ 
  Thus using this result for $I_{2}(x)$  we find that 
 \begin{align}\label{exp3}
 \frac{1}{2\pi i}\int_{(\vartheta)}\frac{2^{\rho} e^{y\sqrt{x}}}{\sqrt{\pi}y^{2\rho+1}}\sum_{j=1}^{p}\sum_{r=1}^{M_{j}}C_{ri}\left( \frac{-1}{\alpha_{j}}\right)^{r} \alpha_{j}^{2k}\Gamma(2k+\rho+\frac{1}{2}) \left(\frac{\alpha_{j} i\lambda y^{2}}{8\pi} \right)^{r-2k} \nonumber \\
 \times\Psi\left(r,r-2k-\rho+\frac{1}{2};\frac{i\alpha_{j} \lambda^{2}}{8\pi } \right)dy \nonumber \\
 =\frac{1}{2^{\rho}}\sum_{j=1}^{p}\sum_{r=1}^{M_{j}}C_{ri}\left( \frac{-1}{\alpha_{j}}\right)^{r}\left( \frac{2\pi}{i\lambda}\right)^{2k}\frac{x^{2k+\rho}}{\Gamma(2k+\rho+1)}{_{1}F_{1}}\left( r,2k+\rho+1 ;\frac{-2\pi x}{i \alpha_{j}\lambda}\right).
 \end{align}
 Put the third term as $I_3(x)$ which is given by 
 \begin{align*}
 I_{3}(x)=\sum_{m=k}^{L} C_{m}\frac{2^{\rho}}{\sqrt{\pi}}\Bigg\lbrace\frac{1}{2\pi i}\int_{(\vartheta)} \frac{e^{y\sqrt{x}}}{y^{2\rho+1}}\Bigg[\left( \frac{8\pi i}{\lambda y^{2}}\right)^{2k-m} \Gamma\left(2k-m+\rho+\frac{1}{2} \right)\\
 -\left( \frac{-8\pi i}{\lambda y^{2}}\right)^{m} \Gamma\left(m+\rho+\frac{1}{2} \right)\Bigg]dy\Bigg\rbrace.
 \end{align*}
 Evaluating each  integrals in ~$I_{3}(x)$~ and simplifying, we get
 \begin{align}\label{exp4}
 I_{3}(x)&=2^{\rho}\sum_{m=k}^{L}C_{m}\Bigg\lbrace \left( \frac{8\pi i}{\lambda }\right)^{2k-m}\frac{x^{k+m}}{2^{4k+2\rho-2m}\Gamma(2k+\rho-m+1)}\nonumber \\
 &-\left( \frac{-8\pi i}{\lambda }\right)^{m}\frac{x^{2k+\rho-m}}{2^{2k+2m}\Gamma(\rho+m+1)}\Bigg\rbrace \nonumber \\
 &=\frac{1}{2^{\rho}}\sum_{m=k}^{L}C_{m}\Bigg\lbrace \left( \frac{2\pi}{\lambda }\right)^{2k-m}\frac{i^{2k-m} x^{2k+\rho-m}}{\Gamma(2k+\rho-m+1)}-\left( \frac{2\pi}{\lambda }\right)^{m}\frac{ (-i)^{m} x^{\rho+m}}{\Gamma(\rho+m+1)}\Bigg\rbrace.
 \end{align}
 Finally we evaluate the last term in (\ref{iden2}) which we denote by $I_4(x)$ as
 \begin{align}\label{exp5}
 I_{4}(x)=-\frac{2^{\rho+1}}{\sqrt{\pi}} a_{0}\Gamma\left( \rho+\frac{1}{2}\right)\frac{1}{2\pi i}\int_{(\vartheta)}\frac{e^{y\sqrt{x}}}{y^{2\rho+1}}dy=-\frac{a_{0}x^{\rho}}{2^{\rho-1}\Gamma(\rho+1)}. 
 \end{align}
  Thus  combining the results in (\ref{exp1}), (\ref{exp2}), (\ref{exp3}), (\ref{exp4}), and  (\ref{exp5}) we obtain 
 \begin{align*}
 &\frac{1}{\Gamma(\rho+1)}{\sum_{0\leq m\leq x}}^{\prime} a_{m}(x-m)^{\rho}
 =i^{-2k}\left(\frac{2\pi}{\lambda} \right)^{-\rho} \sum_{m=1}^{\infty} a_{m}\left(\frac{x}{m} \right)^{\frac{2k+\rho}{2}}J_{2k+\rho}\left(\frac{4\pi\sqrt{mx}}{\lambda} \right)\\
 & +i^{2k}\left(\frac{2\pi}{\lambda} \right)^{2k} \frac{x^{2k+\rho}a_{0}}{\Gamma(2k+\rho+1)} \\
 &+\sum_{m=k}^{L}C_{m}\Bigg\lbrace \left( \frac{2\pi}{\lambda }\right)^{2k-m}\frac{i^{2k-m} x^{2k+\rho-m}}{\Gamma(2k+\rho-m+1)}-\left( \frac{2\pi}{\lambda }\right)^{m}\frac{ (-i)^{m} x^{\rho+m}}{\Gamma(\rho+m+1)}\Bigg\rbrace\\
 &-\sum_{j=1}^{p}\sum_{r=1}^{M_{j}}C_{rj}\left( \frac{-1}{\alpha_{j}}\right)^{r}\left(\frac{2\pi i\alpha_{j}}{\lambda} \right)^{r}\frac{x^{\rho+r}}{\Gamma(\rho+r+1)}{_{1}F_{1}}\left(r,r+\rho+1;\frac{-2\pi i\alpha_{j}x}{\lambda} \right)\\
 &+\sum_{j=1}^{p}\sum_{r=1}^{M_{j}}C_{ri}\left( \frac{-1}{\alpha_{j}}\right)^{r}\left( \frac{2\pi}{i\lambda}\right)^{2k}\frac{x^{2k+\rho}}{\Gamma(2k+\rho+1)}{_{1}F_{1}}\left( r,2k+\rho+1 ;\frac{-2\pi x}{i \alpha_{j}\lambda}\right).
 \end{align*}
  This completes the proof of the converse and therefore, the proof of theorem is  completed.
 \end{proof}
 
 \end{document}